\documentclass[a4paper,12pt]{article}
\usepackage[top=2.5cm,bottom=2.5cm,left=2.5cm,right=2.5cm]{geometry}
\usepackage{amsmath,amssymb,amsthm,graphics,latexsym,amsfonts}
\usepackage{makecell}
\usepackage{caption}
\usepackage{booktabs}
\usepackage{graphicx}
\usepackage{bm,bbm}
\usepackage{mathrsfs}
\usepackage{tabularx}
\usepackage{cite}
\usepackage{indentfirst}

\newtheorem{thm}{Theorem}[section]
\newtheorem{lem}[thm]{Lemma}
\newtheorem{cor}[thm]{Corollary}

\newtheorem{eg}[thm]{Example}

\newtheorem{alg}[thm]{Algorithm}

\captionsetup{labelsep=period} 
\makeatletter
\newcommand{\Rmnum}[1]{\expandafter\@slowromancap\romannumeral #1@} 
\makeatother
\allowdisplaybreaks[3] 

\begin{document}

\title{{Matching forcing polynomials of constructable hexagonal systems}}
\author{Shuang Zhao\thanks{E-mail: zhaosh2018@126.com}\\
{\small School of Information Engineering, Lanzhou University of Finance and Economics, }\\
{\small Lanzhou, Gansu 730020, P. R. China}}
\date{}
\maketitle

\begin{abstract}
    In this paper, we derive recurrence relations of forcing polynomials for monotonic CHS and the other is CHS with one turning.
    \vskip 0.2in \noindent \textbf{Keywords}: Perfect matching; Forcing number; Forcing polynomial; Constructable hexagonal system.
\end{abstract}

\section{Introduction}
    Let $G$ be a graph with vertex set $V(G)$ and edge set $E(G)$. A \emph{perfect matching} $M$ of $G$ is a set of disjoint edges that covers all vertices of $G$. A cycle of $G$ is called \emph{$M$-alternating} if its edges appear alternately in $M$ and $E(G)\setminus M$. A \emph{forcing set} $S$ of $M$ is a subset of $M$ such that $S$ is contained in no other perfect matchings of $G$. The \emph{forcing number} of $M$, denoted by $f(G,M)$, is the smallest cardinality over all forcing sets of $M$. Klein and Randi\'c \cite{early,RK} proposed the innate degree of freedom of a Kekul\'e structure, nowadays it is called the forcing number of a perfect matching by Harary et al. \cite{original}.

    An edge of a graph $G$ is called \emph{forcing} if it belongs to precisely one perfect matching of $G$. The \emph{maximum} (resp. \emph{minimum}) \emph{forcing number} of $G$ is the maximum (resp. minimum) value of $f(G,M)$ over all perfect matchings $M$ of $G$, denoted by $F(G)$ (resp. $f(G)$). The \emph{forcing spectrum} of $G$ is the set of forcing numbers of all perfect matchings of $G$. The author, Zhang and Lin \cite{zhao} introduced the \emph{forcing polynomial} of a graph $G$ as
    \begin{align}
        \label{equ1}
        F(G,x)=\sum_{M\in\mathcal{M}(G)}{{x}^{f(G,M)}}=\sum_{i=f(G)}^{F(G)}{\omega(G,i){x}^{i}},
    \end{align}
    where $\mathcal{M}(G)$ denotes the set of all perfect matchings of $G$, and $\omega(G,i)$ denotes the number of perfect matchings of $G$ with forcing number $i$.
%

    A \emph{hexagonal system} (or \emph{benzenoid system}) $H$ is a 2-connected finite plane graph such that every interior face is surrounded by a regular hexagon. Xu et al. \cite{Xu} showed that the maximum forcing number of $H$ equals the \emph{Clar number} (i.e. the maximum number over disjoint alternating hexagons with respect to a perfect matching) of $H$, which can measure the stability of benzenoid hydrocarbons.

    A hexagonal system is said to be a \emph{constructable hexagonal system}, or briefly \emph{CHS} \cite{dingli2}, if it can be dissected by parallel lines $L_1,L_2,\ldots,L_m$ that are perpendicular to some of its edges, such that it decomposes into $m+1$ paths $P_1,P_2,\ldots,P_{m+1}$, the first one $P_1$ and the last one $P_{m+1}$ must be of even length, and all the other paths are of odd length. We can see that all the hexagons which intersect $L_i$ form a linear hexagonal chain, called the $i$th \emph{row} of CHS for $i=1,2,\ldots,m$. For convenience, we always place CHS satisfying that each $L_i$ is horizontal, see Fig. \ref{mono-chs}(a). Zhang and Li \cite{dingli2} proved that a CHS has a perfect matching, and every perfect matching of a CHS contains precisely one vertical edge in each row.
    \begin{figure}[htbp]
        \centering
        \includegraphics[height=2.2in]{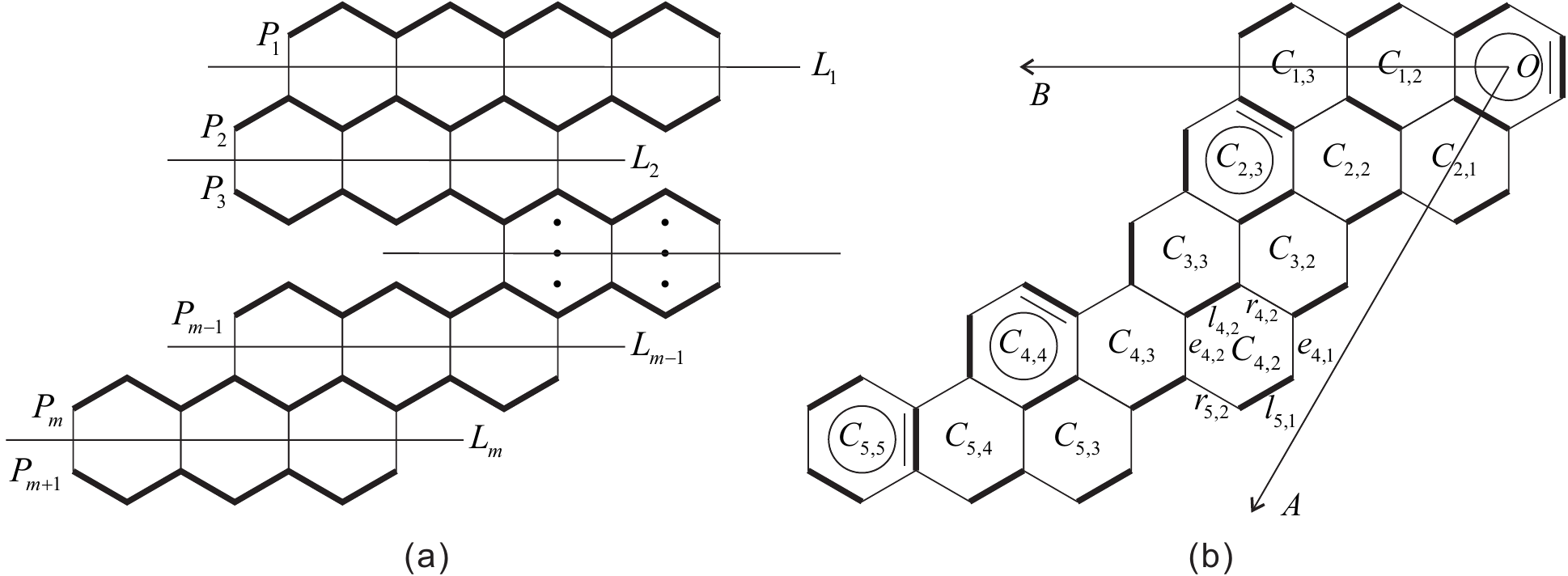}
        \caption{(a) A CHS with $m$ rows, and (b) $CHS(3,3,3,4,5;1,1,2,2,3)$ with perfect matching $(0,3,3,4,4)$.}
        \label{mono-chs}
    \end{figure}
\section{Some preliminaries}
    \begin{thm}{\em\cite{1,15}}
        \label{forc-equi-defi}
        Let $G$ be a graph with a perfect matching $M.$ A subset $S\subseteq M$ is a forcing set of $M$ if and only if each $M$-alternating cycle of $G$ contains at least one edge of $S.$
    \end{thm}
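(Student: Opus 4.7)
The plan is to prove both implications of this ``forcing set $\Leftrightarrow$ covering all $M$-alternating cycles'' characterization by using the standard symmetric-difference correspondence between perfect matchings and $M$-alternating cycles. Recall that if $M$ and $M'$ are perfect matchings of $G$, then $M\triangle M'$ induces a subgraph whose every component is either an isolated vertex or an even cycle alternating between $M$ and $M'$; conversely, if $C$ is an $M$-alternating cycle, then $M\triangle E(C)$ is another perfect matching of $G$. This will be the workhorse in both directions.

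For the forward direction, I would argue by contrapositive. Suppose some $M$-alternating cycle $C$ contains no edge of $S$. Form $M':=M\triangle E(C)$; by the remark above, $M'$ is a perfect matching, and $M'\neq M$ since $C$ is nonempty. The edges of $M$ that got removed are precisely the $M$-edges of $C$, none of which lie in $S$. Hence $S\subseteq M\cap M'\subseteq M'$, so $S$ is contained in a perfect matching other than $M$, showing $S$ is not a forcing set.

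For the backward direction, I would also argue by contrapositive. Assume $S$ is not a forcing set of $M$. Then there exists a perfect matching $M'\neq M$ with $S\subseteq M'$, and so $S\subseteq M\cap M'$. Since $M\neq M'$, the symmetric difference $M\triangle M'$ is nonempty and, by the correspondence above, contains at least one $M$-alternating cycle $C$. Every edge of $C$ belongs to $M\triangle M'$, so no edge of $C$ lies in $M\cap M'$, and in particular no edge of $C$ lies in $S$. Thus $C$ is an $M$-alternating cycle with no edge in $S$, contradicting the hypothesis.

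Neither direction presents a real obstacle: the only subtle point is keeping track that $S\subseteq M$ is required from the start (so $S$ is automatically disjoint from the $M'$-edges of any alternating cycle extracted from $M\triangle M'$), which is what lets us conclude $S\cap E(C)=\emptyset$ in the backward direction. Once that bookkeeping is clear, the proof is just two short applications of the symmetric-difference lemma.
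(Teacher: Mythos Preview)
Your proof is correct and is the standard symmetric-difference argument. Note, however, that the paper does not actually prove this theorem: it is stated with a citation to references~\cite{1,15} (Adams--Mahdian--Mahmoodian and Riddle) and used as a black box, so there is no ``paper's own proof'' to compare against. Your argument is exactly the one found in those sources.
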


    From the above theorem we can see that the forcing number $f(G,M)$ is bounded below by $C(G,M)$, the maximum number of disjoint $M$-alternating cycles. Furthermore, we have the following result.

    \begin{thm}{\em\cite{squaregrids}}
        \label{forc-min-max}
        Let $G$ be a planar bipartite graph. Then for each perfect matching $M$ of $G,$ we have $f(G,M)=C(G,M).$
    \end{thm}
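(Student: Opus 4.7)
The plan is to prove $f(G,M)=C(G,M)$ by establishing the two inequalities separately.

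The direction $f(G,M)\geq C(G,M)$ is immediate from Theorem~\ref{forc-equi-defi}. If $\{C_{1},\ldots,C_{k}\}$ is a family of pairwise vertex-disjoint $M$-alternating cycles of maximum size $k=C(G,M)$, and $S$ is any forcing set of $M$, then by Theorem~\ref{forc-equi-defi} the set $S$ meets each $C_{i}$ in at least one edge; vertex-disjointness forces these $k$ edges to be distinct, so $|S|\geq k$. Minimizing over all forcing sets $S$ gives the inequality.

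For the reverse inequality $f(G,M)\leq C(G,M)$ the plan is to build an explicit forcing set of size $C(G,M)$ by induction, exploiting both planarity and bipartiteness (the inequality can fail strictly for general graphs). I would fix a plane embedding of $G$, use the proper 2-coloring to orient every $M$-alternating cycle consistently (say, so that each $M$-edge points from its white to its black endpoint), and then identify an \emph{innermost} $M$-alternating cycle $C_{0}$: one whose bounded region contains no other $M$-alternating cycle. The base case $C(G,M)=0$ is trivial, since by Theorem~\ref{forc-equi-defi} the empty set is a forcing set, so $f(G,M)=0$.

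For the inductive step I would remove $V(C_{0})$ to form $G'=G-V(C_{0})$ and set $M'=M\setminus E(C_{0})$. Since $M\cap E(C_{0})$ is a perfect matching of $V(C_{0})$, $M'$ is a perfect matching of the planar bipartite graph $G'$. Any disjoint family of $M'$-alternating cycles in $G'$ together with $C_{0}$ yields a disjoint family of $M$-alternating cycles in $G$, so $C(G',M')\leq C(G,M)-1$. By the inductive hypothesis there exists a forcing set $S'$ of $M'$ with $|S'|\leq C(G,M)-1$. Picking any edge $e_{0}\in C_{0}\cap M$, I would then verify via Theorem~\ref{forc-equi-defi} that $S:=S'\cup\{e_{0}\}$ is a forcing set of $M$ in $G$: alternating cycles contained in $G'$ are hit by $S'$, the cycle $C_{0}$ itself is hit by $e_{0}$, and any remaining $M$-alternating cycle, which must share some but not all vertices with $C_{0}$, needs to be shown to be hit by $S$ as well.

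This last sub-case is the main obstacle. An $M$-alternating cycle $C\neq C_{0}$ that meets $V(C_{0})$ partially need not contain $e_{0}$ a priori, and $S'$ only sees the portion of $C$ lying in $G'$, which need not itself be an $M'$-alternating cycle. The argument must exploit planarity (two $M$-alternating cycles in a plane bipartite graph cannot cross transversally in the embedding) together with the innermost choice of $C_{0}$ to show that any such $C$ can be rerouted along arcs of $C_{0}$ into an $M$-alternating cycle strictly contained in $G'$ that $S'$ already hits, or else is forced to contain $e_{0}$. Making this topological peeling rigorous, and in particular ruling out pathological configurations where alternating cycles share long common paths with $C_{0}$, is where I expect the bulk of the technical work; it is also precisely the step that fails outside the planar bipartite class, in line with the hypotheses of the theorem.
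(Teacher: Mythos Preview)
The paper does not prove this theorem at all: it is quoted verbatim from Pachter and Kim \cite{squaregrids} and used as a black box. There is therefore no ``paper's own proof'' to compare your proposal against.

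That said, your proposal is not a proof but a plan with an explicitly unfinished core step. The lower bound $f(G,M)\ge C(G,M)$ is fine. For the upper bound, the whole difficulty is concentrated in what you call ``the last sub-case'': an $M$-alternating cycle $C$ that shares some but not all vertices with the innermost cycle $C_{0}$. Your proposed fix is to ``reroute $C$ along arcs of $C_{0}$'' into an $M'$-alternating cycle of $G'$, but this does not work in general. When $C$ and $C_{0}$ share a vertex $v$ they must share the $M$-edge at $v$, and the symmetric difference $E(C)\triangle E(C_{0})$ is then an even subgraph in which the edges incident to such $v$ are both non-$M$-edges; it need not decompose into $M$-alternating cycles, let alone into a single one lying in $G'$. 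Nor does the innermost choice of $C_{0}$ prevent $C$ from weaving in and out of the disk bounded by $C_{0}$ along long common subpaths, so the ``topological peeling'' you allude to has no obvious inductive invariant. You are right that this is exactly where bipartiteness and planarity must do real work, but the sketch as written does not supply that work.

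For comparison, the argument in \cite{squaregrids} avoids this local surgery entirely. One orients $G$ by sending every edge of $M$ from white to black and every non-$M$-edge from black to white; $M$-alternating cycles then correspond bijectively to directed cycles of the resulting planar digraph, and a forcing set of $M$ corresponds to a set of $M$-arcs meeting every dicycle. The equality $f(G,M)=C(G,M)$ is then an instance of the min--max theorem that in a planar digraph the minimum size of an arc set meeting all dicycles equals the maximum number of arc-disjoint dicycles (obtained from the Lucchesi--Younger theorem via planar duality). This global duality is what replaces the delicate case analysis your inductive approach would require.
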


    Obviously, the above theorem holds for hexagonal systems. For $S\subseteq E(G)$, let $G-V(S)$ denote the subgraph obtained from $G$ by deleting the ends of edges that belong to $S$. An edge $e$ is said to be \emph{defined} by $V(S)$ if it contains in all perfect matchings of $G-V(S)$. Let $G\circleddash V(S)$ denote the subgraph obtained from $G-V(S)$ by deleting the ends of edges that defined by $V(S)$.

    \begin{lem}
        \label{forc-pend-edge-judg}
        Let $G$ be a graph with a perfect matching $M.$ A subset $S\subseteq M$ is a forcing set of $M$ if and only if $G\circleddash V(S)$ is empty$.$
    \end{lem}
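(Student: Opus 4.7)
The plan is to reduce the statement to the following intermediate fact: a subset $S\subseteq M$ is a forcing set of $M$ if and only if $M\setminus S$ is the \emph{unique} perfect matching of $G-V(S)$. This bi-directional bijection between perfect matchings of $G$ containing $S$ and perfect matchings of $G-V(S)$ is immediate, since any PM $M'$ of $G$ containing $S$ restricts to a PM $M'\setminus S$ of $G-V(S)$, and conversely any PM $N$ of $G-V(S)$ lifts to the PM $N\cup S$ of $G$.

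For the ``only if'' direction, assume $S$ is a forcing set of $M$. By the reduction, $M\setminus S$ is the unique PM of $G-V(S)$. Therefore every edge of $M\setminus S$ is trivially contained in all PMs of $G-V(S)$, so every such edge is defined by $V(S)$. Since $M\setminus S$ already covers all vertices of $G-V(S)$, deleting the endpoints of the defined edges leaves no vertex behind; hence $G\circleddash V(S)$ is empty.

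For the ``if'' direction, note first that $M\setminus S$ is always a PM of $G-V(S)$, so in particular $G-V(S)$ has at least one perfect matching. Any edge defined by $V(S)$ then lies in this PM $M\setminus S$, so the set $D$ of defined edges is a matching. Assuming $G\circleddash V(S)$ is empty means that $D$ saturates every vertex of $G-V(S)$, so $D$ is itself a perfect matching of $G-V(S)$. But every PM of $G-V(S)$ must contain every edge of $D$, and since $D$ already exhausts all vertices, $D$ is the unique PM of $G-V(S)$; consequently $D=M\setminus S$. By the reduction, $S$ is then a forcing set of $M$.

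The argument is really just careful bookkeeping, so there is no deep obstacle; the only point requiring a moment's care is the observation that $M\setminus S$ witnesses the existence of a perfect matching of $G-V(S)$, which is what forces the set of defined edges to be a matching and keeps the definition of $G\circleddash V(S)$ from degenerating. One could alternatively phrase this via Theorem~\ref{forc-equi-defi}, translating the non-emptiness of $G\circleddash V(S)$ into the existence of an $M$-alternating cycle disjoint from $S$, but the direct matching-reduction argument above seems cleaner.
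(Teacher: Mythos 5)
Your proposal is correct and follows essentially the same route as the paper, whose proof is just the one-line assertion of the same two equivalences you establish ($S$ forcing $\Leftrightarrow$ $M\setminus S$ is the unique perfect matching of $G-V(S)$ $\Leftrightarrow$ $G\circleddash V(S)$ is empty); you have merely filled in the bookkeeping the paper leaves implicit.
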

    \begin{proof}
        Given a subset $S\subseteq M$. $S$ is a forcing set of $M$ is equivalent to $G-V(S)$ has a unique perfect matching $M\setminus S$, and is equivalent to $G\circleddash V(S)$ is empty.
    \end{proof}

    \begin{lem}
        \label{mini-forc}
        Let $G$ be a graph with a perfect matching $M,$ and $\mathcal{C}$ be a set of disjoint $M$-alternating cycles of $G.$ Given a subset $S\subseteq M,$ which consists of precisely one edge from each cycle in $\mathcal{C}.$ If $V(S)$ defines all the other edges in $M\cap E(\mathcal{C}),$ then $S$ is contained in some minimum forcing set of $M.$
    \end{lem}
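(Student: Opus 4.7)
The plan is to reduce the problem to a smaller forcing problem on $G':=G\circleddash V(S)$. Let $D$ denote the set of edges defined by $V(S)$. Since $M\setminus S$ is a perfect matching of $G-V(S)$, every defined edge lies in $M\setminus S$, hence $D\subseteq M\setminus S$, and we can write $M=S\sqcup D\sqcup M'$ where $M':=M\setminus(S\cup D)$ is a perfect matching of $G'$. The hypothesis that $V(S)$ defines all the other edges in $M\cap E(\mathcal{C})$ is exactly the inclusion $M\cap E(\mathcal{C})\setminus S\subseteq D$, so every $M$-edge inside a cycle of $\mathcal{C}$ lives in $S\cup D$, and no such edge appears in $M'$.

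I would then establish two transfer statements between forcing sets of $(G,M)$ containing $S$ and forcing sets of $(G',M')$. The first is that for any forcing set $T'$ of $M'$ in $G'$, the set $S\cup T'$ is a forcing set of $M$ in $G$, with $|S\cup T'|=|S|+|T'|$. To prove this, take any perfect matching $N$ of $G-V(S\cup T')$; then $N\cup T'$ is a perfect matching of $G-V(S)$ and therefore contains $D$, so $D\subseteq N$ and $N\setminus D$ is a perfect matching of $G'-V(T')$. Because $T'$ is a forcing set of $M'$ in $G'$, Lemma \ref{forc-pend-edge-judg} forces $N\setminus D=M'\setminus T'$, whence $N=M\setminus(S\cup T')$ is unique, so $S\cup T'$ is a forcing set of $M$ by Lemma \ref{forc-pend-edge-judg}. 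Disjointness $S\cap T'=\emptyset$ follows from $T'\subseteq M\setminus S$.

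The second transfer statement is that for any minimum forcing set $T$ of $M$ in $G$, the set $T\cap M'$ is a forcing set of $M'$ in $G'$ of cardinality at most $|T|-|S|$. The forcing-set property is immediate from Theorem \ref{forc-equi-defi}: every $M'$-alternating cycle of $G'$ is an $M$-alternating cycle of $G$, hence contains an edge of $T$, and that edge---being an $M$-edge still present in $G'$---belongs to $M'$. For the size bound, Theorem \ref{forc-equi-defi} applied to the $|\mathcal{C}|=|S|$ disjoint cycles in $\mathcal{C}$ gives $|T\cap M\cap E(\mathcal{C})|\ge|\mathcal{C}|=|S|$; since $M\cap E(\mathcal{C})\subseteq S\cup D$, this yields $|T\cap(S\cup D)|\ge|S|$, hence $|T\cap M'|=|T|-|T\cap(S\cup D)|\le f(G,M)-|S|$.

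Combining the two statements gives $f(G',M')=f(G,M)-|S|$. Taking $T^{*}$ to be a minimum forcing set of $M'$ in $G'$, the first statement then yields that $S\cup T^{*}$ is a forcing set of $M$ in $G$ of size $|S|+f(G',M')=f(G,M)$, and hence a minimum forcing set containing $S$. The main obstacle is the first transfer statement, specifically verifying that every edge defined by $V(S)$ remains defined after the additional vertex deletion $V(T')$; this is where the matching property of $M$, which guarantees $V(T')\cap(V(S)\cup V(D))=\emptyset$, really pulls its weight.
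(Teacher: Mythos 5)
Your proof is correct and follows essentially the same route as the paper's: both reduce to $G' = G\circleddash V(S)$, use the $|S|$ disjoint $M$-alternating cycles in $\mathcal{C}$ together with the hypothesis $M\cap E(\mathcal{C})\subseteq S\cup D$ to force any forcing set of $M$ to spend at least $|S|$ edges outside $G'$, and then append a minimum forcing set of $G'$. The only cosmetic difference is that you verify that $S\cup T'$ is a forcing set by a direct uniqueness argument on perfect matchings, whereas the paper invokes the identity $G\circleddash V(S\cup S')=(G\circleddash V(S))\circleddash V(S')$ and Lemma \ref{forc-pend-edge-judg}.
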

    \begin{proof}
        Let $G'=G\circleddash V(S)$, $G''=G-V(G')$. Then $M\cap E(G')$ is a perfect matching of $G'$, $M\cap E(G'')$ is a perfect matching of $G''$, and $S\subseteq M\cap E(G'')$. Since there is $|S|$ number of disjoint $M$-alternating cycles in $G''$ and $G''\circleddash V(S)$ is empty, $S$ is a minimum forcing set of $M\cap E(G'')$ in $G''$ by Theorem \ref{forc-equi-defi} and Lemma \ref{forc-pend-edge-judg}. Let $S'$ be a minimum forcing set of $M\cap E(G')$ in $G'$. Since $G\circleddash V(S\cup S')=G\circleddash V(S)\circleddash V(S')=G'\circleddash V(S')$ is empty, we know that $S\cup S'$ is a forcing set for $M$ in $G$ by Lemma \ref{forc-pend-edge-judg}. Suppose $S_0$ is another forcing set for $M$ in $G$ such that $|S_0|<|S\cup S'|$. Then either $|S_0\cap E(G'')|<|S|$, or $|S_0\cap E(G')|<|S'|$. It follows that either $S_0\cap E(G'')$ is not a forcing set for $M\cap E(G'')$ in $G''$, or $S_0\cap E(G')$ is not a forcing set for $M\cap E(G')$ in $G'$. This implies that there is an $M$-alternating cycle in $G''$ or $G'$ containing no edges in $S_0$, which is a contradiction to Theorem \ref{forc-equi-defi}.
    \end{proof}

    \begin{lem}
        \label{forc-subs-calc}
        Let $G$ be a graph with a perfect matching $M$ and a minimum forcing set $S$ of $M.$ If $S'\subseteq S,$ then
        \begin{align*}
            f(G,M)=f(G- V(S'),M\setminus S')+|S'|=f(G\circleddash V(S'),M\cap E(G\circleddash V(S')))+|S'|.
        \end{align*}
    \end{lem}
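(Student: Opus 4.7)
The plan is to prove the two equalities separately by direct arguments about forcing sets, without invoking Theorem \ref{forc-min-max}. For the first equality $f(G,M)=f(G-V(S'),M\setminus S')+|S'|$, I would establish both inequalities. For ``$\le$'', I would show that if $T$ forces $M\setminus S'$ in $G-V(S')$, then $T\cup S'$ forces $M$ in $G$: any perfect matching $M^{\star}$ of $G$ containing $T\cup S'$ restricts, after deleting $V(S')$, to a perfect matching of $G-V(S')$ containing $T$, which therefore equals $M\setminus S'$, forcing $M^{\star}=M$. For ``$\ge$'', a symmetric argument shows that $S\setminus S'$ forces $M\setminus S'$ in $G-V(S')$, since $S$ forces $M$ in $G$; this yields $f(G-V(S'),M\setminus S')\le|S|-|S'|=f(G,M)-|S'|$, completing the equality.

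For the second equality, let $D$ denote the set of edges of $G-V(S')$ defined by $V(S')$, so that $G\circleddash V(S')=(G-V(S'))-V(D)$. Since $M\setminus S'$ is a perfect matching of $G-V(S')$ and every defined edge appears in every such matching, $D\subseteq M\setminus S'$; because $M$ is a matching, every vertex in $V(D)$ is $M$-saturated by its edge in $D$, giving $M\cap E(G\circleddash V(S'))=(M\setminus S')\setminus D$. I would first observe that defined edges are redundant in forcing sets: if $T$ forces $M\setminus S'$ in $G-V(S')$ and $e\in T\cap D$, then any perfect matching containing $T\setminus\{e\}$ must, by the definition of $D$, also contain $e$, hence $T$, and therefore equals $M\setminus S'$; so a minimum forcing set may be chosen disjoint from $D$. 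Next, deletion and union with $D$ give mutually inverse bijections between the perfect matchings of $G-V(S')$ and those of $G\circleddash V(S')$, sending $M\setminus S'$ to $M\cap E(G\circleddash V(S'))$ and preserving the containment of any $T\subseteq(M\setminus S')\setminus D$. Equality of the two forcing numbers follows.

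The main obstacle is organisational rather than conceptual: one must keep track of the three graphs $G$, $G-V(S')$, and $G\circleddash V(S')$, together with their respective matchings, and verify that the bijection between perfect matchings of the last two graphs respects both the containment relation that defines ``forcing set'' and the minimality condition. No deeper combinatorial idea is required beyond Lemma \ref{forc-pend-edge-judg} and the definition of a defined edge.
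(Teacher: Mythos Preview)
Your proposal is correct and follows essentially the same approach as the paper: the paper proves the first equality by the identical two-inequality argument (showing $S\setminus S'$ forces $M\setminus S'$ in $G-V(S')$, and that any forcing set there extends by $S'$ to one in $G$), and dismisses the second equality as ``obvious since every edge that is defined by $V(S')$ must belong to $M\setminus S'$,'' which your more detailed bijection argument merely unpacks. The only difference is that you give a fuller justification of the step the paper treats as immediate.
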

    \begin{proof}
        The second equation is obvious since every edge that is defined by $V(S')$ must belong to $M\setminus S'$ in $G-V(S')$. We now consider the first equation. Since $S$ is a forcing set of $M$ in $G$, $S\setminus S'$ is a forcing set of $M\setminus S'$ in $G-V(S')$. On the other hand, if $S_0$ is a forcing set of $M\setminus S'$ in $G-V(S')$, then $S'\cup S_0$ is a forcing set of $M$ in $G$. It follows that
        \begin{align*}
            |S|=f(G,M)\le f(G-V(S'),M\setminus S')+|S'|\le |S\setminus S'|+|S'|,
        \end{align*}
        which implies the first equation.
    \end{proof}

\section{Monotonic CHS}
    A CHS is called \emph{left-monotonic} (resp. \emph{right-monotonic}) if the leftmost hexagon in each row is located on the left (resp. right) of the leftmost hexagon in the row immediately above. Left-monotonic CHS and right-monotonic CHS both are called \emph{monotonic CHS}. Since inverting a right-monotonic CHS upside down derives a left-monotonic CHS, we only talk about left one in the following, see Fig. \ref{mono-chs}(b). In order to label each hexagon, we suppose the side of every hexagon has length $\frac{\sqrt{3}}{3}$, which implies that the length between the center of two adjacent hexagon is 1. Denote the hexagon in the top right corner by $C_{1,1}$, and its center by $O$. From $O$ draw two rays $OA$ and $OB$ perpendicular to the bottom left oblique edge and the left vertical edge of $C_{1,1}$, respectively. Denote a hexagon by $C_{i,j}$, if two lines can be drawn through its center $W$ such that one is parallel to axis $OB$ and intersects axis $OA$ at the point $W_A$ and the other is parallel to axis $OA$ and intersects $OB$ at the point $W_B$, and the length of $OW_A$ is $i-1$ and the length of $OW_B$ is $j-1$, see Fig. \ref{mono-chs}(b).

    If a monotonic CHS has $m(\geqslant 1)$ rows, and the leftmost and rightmost hexagons in $i$th row are $C_{i,k_i}$ and $C_{i,h_i}$ respectively for $i=1,2,\ldots,m$ ($k_{i+1}\geqslant k_{i}$, $h_{i+1}\geqslant h_{i}$, $k_j\geqslant h_j$ for $i=1,2,\ldots,m-1$, $j=1,2,\ldots,m$), then we denote it by $CHS(k_1,k_2,\ldots,k_m;h_1,h_2,\ldots,h_m)$, or briefly $CHS(\{k_s;h_s\}_{s=1}^{m})$. Furthermore, for $i=1,2,\ldots,m$ and $j=h_i,h_{i}+1,\ldots,k_i$, denote the left vertical, top left oblique, top right oblique, right vertical, bottom right oblique, and bottom left oblique edges of $C_{i,j}$ by $e_{i,j}$, $l_{i,j}$, $r_{i,j}$, $e_{i,j-1}$, $l_{i+1,j-1}$, and $r_{i+1,j}$, respectively, see Fig. \ref{mono-chs}(b).

    In particular, $CHS(\{k_s;1\}_{s=1}^{m})$ with $1\leqslant k_1\leqslant k_2\leqslant\cdots\leqslant k_m$ is a truncated parallelogram (see Fig. \ref{eg-mono-chs}(a)), $CHS(k;1)$ is a linear hexagonal chain (see Fig. \ref{eg-mono-chs}(b)), $CHS(1,2,\ldots,k,k;1,1,2,\ldots,k)$ is a zigzag hexagonal chain with even number of hexagons (see Fig. \ref{eg-mono-chs}(c)), $CHS(2,3,\ldots,k,k;1,2,\ldots,k)$ is a zigzag hexagonal chain with odd number of hexagons (see Fig. \ref{eg-mono-chs}(d)), and $CHS(\{k;1\}_{s=1}^{m})$ is a benzenoid parallelogram (see Fig. \ref{eg-mono-chs}(e)).

    \begin{figure}[htbp]
        \centering
        \includegraphics[height=1.15in]{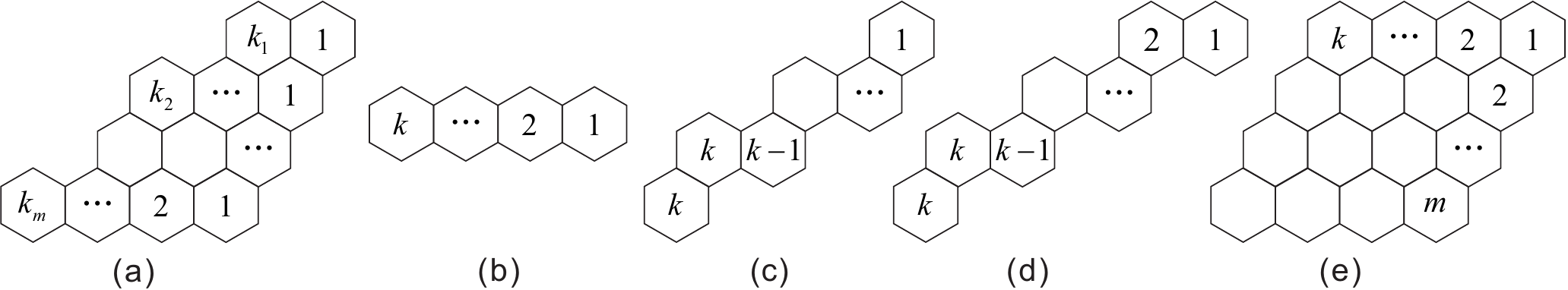}
        \caption{(a) $CHS(\{k_s;1\}_{s=1}^{m})$, (b) $CHS(k;1)$, (c) $CHS(1,2,\ldots,k,k;1,1,2,\ldots,k)$, (d) $CHS(2,3,\ldots,k,k;1,2,\ldots,k)$, and (e) $CHS(\{k;1\}_{s=1}^{m})$.}
        \label{eg-mono-chs}
    \end{figure}

    In order to derive forcing polynomial of monotonic CHS's, we do some preliminaries first.

    \begin{lem}{\em\cite{dingli2}}
        \label{ding}
        There is a bijection $g$ between all perfect matchings $M$ of $CHS(\{k_s;$\\$h_s\}_{s=1}^{m})$ and all non-decreasing sequences $(a_1,a_2,\ldots,a_m)$ with $a_i\in \{h_i-1,h_i,\ldots,k_i\}$ for $i=1,2,\ldots,m,$ such that $g(M)=(a_1,a_2,\ldots,a_m)$ if $e_{i,a_i}\in M$ for $i=1,2,\ldots,m.$
    \end{lem}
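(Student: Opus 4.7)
The plan is to verify three things: that $g$ is well-defined, that $g(M)$ is always non-decreasing, and that $g$ is a bijection.

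For well-definedness, I apply the Zhang--Li result cited at the end of Section 1: every perfect matching $M$ of a CHS contains exactly one vertical edge in each row $i$. Since the vertical edges in row $i$ are precisely $e_{i,h_i-1},e_{i,h_i},\ldots,e_{i,k_i}$, the index $a_i\in\{h_i-1,\ldots,k_i\}$ with $e_{i,a_i}\in M$ is uniquely defined, and so $g(M)=(a_1,\ldots,a_m)$ takes values in $\prod_{i=1}^m\{h_i-1,\ldots,k_i\}$.

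To show that $g(M)$ is non-decreasing and that $g$ is injective, I would use an iterated forcing argument in the spirit of Lemma \ref{forc-pend-edge-judg}. Once the verticals $e_{i,a_i}$ are placed, the matching edges incident to their endpoints are forced inward from the boundary: a vertex whose neighbors are all deleted or already matched except one has its matching edge uniquely determined. Propagating this forcing across each row (which is a linear hexagonal chain) and then between consecutive rows via the shared oblique edges shows that the matching is entirely determined by the sequence $(a_1,\ldots,a_m)$, giving injectivity. Moreover, the propagation between rows $i$ and $i+1$ requires a parity compatibility: in the ``staircase'' region between $e_{i,a_i}$ and $e_{i+1,a_{i+1}}$, the number of vertices that must be matched among themselves is even if and only if $a_i\leqslant a_{i+1}$. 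If $a_i>a_{i+1}$ for some $i$, an odd-sized region appears and no perfect matching can exist there, contradicting the existence of $M$; hence $g(M)$ must be non-decreasing.

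For surjectivity, given a non-decreasing sequence $(a_1,\ldots,a_m)$ in the prescribed range, I would apply the same forcing procedure to the set $S=\{e_{i,a_i}\}_{i=1}^m$. The non-decreasing condition is precisely what guarantees that the forced propagation terminates with every vertex matched, producing a perfect matching $M$ of the CHS with $g(M)=(a_1,\ldots,a_m)$, completing the bijection.

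The principal obstacle will be the careful local analysis of how forced edges propagate between consecutive rows: tracking how the monotonicity conditions $h_{i+1}\geqslant h_i$ and $k_{i+1}\geqslant k_i$ interact with the positions $a_i,a_{i+1}$, and verifying the parity claim that the staircase between $e_{i,a_i}$ and $e_{i+1,a_{i+1}}$ has an odd vertex deficit exactly when $a_i>a_{i+1}$. This reduces to a case-by-case computation, handling separately the boundary cases $a_i=h_i-1$ and $a_i=k_i$, where one of the two hexagons nominally flanking the vertical edge does not exist.
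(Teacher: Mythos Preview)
The paper does not include a proof of this lemma; it is quoted from Zhang and Li \cite{dingli2} and used as a black box, so there is no argument in the paper against which to compare yours.

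Your outline is the natural one and is correct in structure. Well-definedness follows exactly as you say from the one-vertical-edge-per-row property already recalled at the end of Section~1. The forced-propagation mechanism you describe for injectivity and surjectivity is also the standard way this bijection is established: once $e_{i,a_i}$ is fixed, the matching on row $i$ (a linear chain) is completely determined, and the non-decreasing condition $a_i\le a_{i+1}$ is precisely the compatibility needed for the determined edges on the boundary shared by rows $i$ and $i+1$ to be consistent.

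One refinement worth making when you write this up in full: the obstruction in the case $a_i>a_{i+1}$ is more transparently stated as a direct edge conflict than as an ``odd vertex-count'' parity defect. After $e_{i,a_i}$ is placed, propagation along the bottom of row $i$ forces a specific alternating pattern of oblique edges $l_{i+1,\cdot}$ and $r_{i+1,\cdot}$; after $e_{i+1,a_{i+1}}$ is placed, propagation along the top of row $i+1$ forces its own pattern on the same path. The two patterns coincide on the overlap exactly when $a_i\le a_{i+1}$; otherwise two adjacent oblique edges are both forced into $M$ at some shared vertex, an immediate contradiction. Framing it this way avoids the boundary case-analysis you anticipate for $a_i=h_i-1$ and $a_i=k_i$, since the conflict (or its absence) is local to the overlap region and does not depend on whether the flanking hexagons exist.
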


    From the above lemma, we can use a sequence to express each perfect matching of a monotonic CHS for convenience. As an example, the perfect matching of $CHS(3,3,3,4,5;$\\$1,1,2,2,3)$ illustrated with a set of bold lines in Fig. \ref{mono-chs}(b) can be expressed by (0,3,3,4,4).

    Now we can give a recurrence relation of the forcing polynomial for $CHS(\{k_s;h_s\}_{s=1}^{m})$. For convenience, from now on we define $CHS(\{b_s;d_s\}_{s=1}^{i})=CHS(\{b_s;d_s\}_{s=1}^{i-1})$ if $b_i<d_i$ and $i\geqslant1$, and $CHS(\{b_s;d_s\}_{s=1}^{j})$ as an empty graph if $j=0$. And we make a convention that
    \begin{align*}
        \mathcal{M}(\{k_s;h_s\}_{s=1}^{m})&=\mathcal{M}(CHS(\{k_s;h_s\}_{s=1}^{m})),\\
        F(\{k_s;h_s\}_{s=1}^{m})&=F(CHS(\{k_s;h_s\}_{s=1}^{m}),x).
    \end{align*}
    According to which vertical edge in the last row that belongs to perfect matching, we divide $\mathcal{M}(\{k_s;h_s\}_{s=1}^{m})$ in $k_m-h_m+2$ subsets:
    \begin{align*}
        \mathcal{M}_i(\{k_s;h_s\}_{s=1}^{m})=\{M\in \mathcal{M}(\{k_s;h_s\}_{s=1}^{m}): e_{m,i}\in M\}
    \end{align*}
    for $i=h_m-1,h_m,\ldots,k_m$. By Eq. (\ref{equ1}), we have
    \begin{align}
        \label{mono0}
        F(\{k_s;h_s\}_{s=1}^{m})=&\sum_{i=h_m-1}^{k_m}\sum \limits_{ M\in {\mathcal{M}_i(\{k_s;h_s\}_{s=1}^{m})}}{ x^{ f(CHS(\{k_s;h_s\}_{s=1}^{m}),M) } }\nonumber\\
        :=&\sum_{i=h_m-1}^{k_m}F_i(\{k_s;h_s\}_{s=1}^{m}).
    \end{align}

    \begin{thm}
        \label{mono-chs-forc}
        The forcing polynomial of $CHS(\{k_s;h_s\}_{s=1}^{m})$ $(k_{i+1}\geqslant k_{i},$ $h_{i+1}\geqslant h_{i},$ $k_j\geqslant h_j$ for $i=1,2,\ldots,m-1,$ $j=1,2,\ldots,m,~m\geqslant 1)$ has the following recurrence relation$:$
        \begin{align*}
            F(\{k_s;h_s\}_{s=1}^{m})=&\sum_{i=h_m-1}^{k_m-1}F(\{\min\{k_s,i\};h_s\}_{s=1}^{m-1})x+\sum_{j=p}^mF(\{\min\{k_s,k_m-1\};h_s\}_{s=1}^{j-1})x
        \end{align*}
        where $p=\min\{s:k_s=k_m\},$ $F(\{b_s;d_s\}_{s=1}^{i})=F(\{b_s;d_s\}_{s=1}^{i-1})$ if $b_i<d_i$ and $i\geqslant 1,$ and $F(\{b_s;d_s\}_{s=1}^{j})=1$ if $j=0.$
    \end{thm}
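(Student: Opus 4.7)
The plan is to partition the perfect matchings of $CHS(\{k_s;h_s\}_{s=1}^m)$ by the index $a_m$ of the unique vertical $M$-edge in row $m$, using the bijection of Lemma \ref{ding}, and then to reinterpret each block as the perfect matchings of a strictly smaller monotonic CHS with forcing number exactly one greater. Summing these contributions in Eq. (\ref{mono0}) will yield the two sums in the theorem.

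For the first sum, fix $i \in \{h_m - 1, \ldots, k_m - 1\}$ and let $M \in \mathcal{M}_i$. Using the single-vertical-per-row property of Lemma \ref{ding} together with the non-decreasing constraint $a_s \le i$ for $s < m$, one checks that the hexagon $C_{m, i+1}$ is $M$-alternating, with $M$-edges $e_{m, i}$, $l_{m, i+1}$, $r_{m+1, i+1}$. Applying Lemma \ref{mini-forc} with $\mathcal{C} = \{C_{m, i+1}\}$ and $S = \{e_{m, i}\}$ and verifying by a short pendant argument (the bottom vertex of $C_{m, i+1}$ becomes a pendant after $V(e_{m, i})$ is removed, forcing $r_{m+1, i+1}$, and a symmetric chain defines $l_{m, i+1}$) places $e_{m, i}$ in some minimum forcing set of $M$. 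Lemma \ref{forc-subs-calc} then gives $f(G, M) = f(G \circleddash V(\{e_{m, i}\}), M') + 1$. An iterative pendant cascade continues along row $m$ and propagates via shared oblique edges into row $m-1$; the constraint $a_s \le i$ guarantees that no vertical $e_{s, j}$ with $j > i$ lies in $M$, which forces the removal of exactly those hexagons $C_{s, j}$ with $j > i$, so that $G \circleddash V(\{e_{m, i}\}) = CHS(\{\min\{k_s, i\}; h_s\}_{s=1}^{m-1})$. Invoking Lemma \ref{ding} once more on the residual matching yields $F_i(\{k_s;h_s\}_{s=1}^m) = F(\{\min\{k_s, i\}; h_s\}_{s=1}^{m-1}) \cdot x$.

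For the second sum, fix $i = k_m$ and further stratify $\mathcal{M}_{k_m}$ by $j^* := \min\{s : a_s = a_{s+1} = \cdots = a_m = k_m\}$. The non-decreasing property combined with $a_s \le k_s$ forces $k_{j^*} = \cdots = k_m = k_m$, so $j^* \ge p$. For each $j \in \{p, \ldots, m\}$, matchings with $j^* = j$ biject (via Lemma \ref{ding}) to perfect matchings of $CHS(\{\min\{k_s, k_m - 1\}; h_s\}_{s=1}^{j-1})$, since the combined constraints $a_{j-1} < k_m$ and $a_{j-1} \le k_{j-1}$ are equivalent to $a_{j-1} \le \min\{k_{j-1}, k_m - 1\}$. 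The topmost hexagon $C_{j, k_m}$ of the vertical stack $C_{j, k_m}, \ldots, C_{m, k_m}$ is $M$-alternating, with $M$-edges $e_{j, k_m}$, $r_{j, k_m}$, $l_{j+1, k_m - 1}$. Applying Lemmas \ref{mini-forc} and \ref{forc-subs-calc} with $S = \{r_{j, k_m}\}$, a pendant cascade propagates downward through the stack (stripping rows $j, j+1, \ldots, m$) and upward along the leftmost column (stripping hexagons $C_{p, k_m}, \ldots, C_{j-1, k_m}$), leaving $G \circleddash V(\{r_{j, k_m}\}) = CHS(\{\min\{k_s, k_m - 1\}; h_s\}_{s=1}^{j-1})$ and hence $f(G, M) = f(\text{sub-CHS}, M') + 1$.

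The principal obstacle will be verifying, in both cases, that the pendant-edge cascade from $V(S)$ terminates precisely with the claimed sub-CHS. This reduces to a careful geometric trace through the hexagonal lattice: each forced bottom oblique removes a vertex shared with the adjacent row, which triggers the next pendant, and the non-decreasing structure of $(a_s)$ must be used to show that the cascade neither over-reaches (stripping hexagons that should remain in the sub-CHS) nor under-reaches (leaving hexagons that should be stripped). A case analysis based on whether the higher-row neighbours $C_{m-1, j}$ (respectively $C_{j-1, k_m - 1}$) exist at each step of the propagation is needed to make this rigorous.
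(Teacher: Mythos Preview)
Your proposal is correct and follows essentially the same route as the paper: partition $\mathcal{M}$ by the vertical edge $e_{m,a_m}$ in the last row, for $a_m<k_m$ use the $M$-alternating hexagon $C_{m,a_m+1}$ with $S=\{e_{m,a_m}\}$, for $a_m=k_m$ further stratify by $j=\min\{s:e_{s,k_m}\in M\}$ and use $C_{j,k_m}$ with $S=\{r_{j,k_m}\}$, then invoke Lemmas~\ref{mini-forc} and~\ref{forc-subs-calc} to pass to the sub-CHS. The only stylistic difference is that where you sketch a pendant-edge cascade to justify that $V(S)$ defines the remaining $M$-edges of the chosen hexagon and to identify $G\circleddash V(S)$, the paper appeals directly to Lemma~\ref{ding} (every PM of $G-V(S)$ extends to a PM of $G$ with prescribed $a_m$, hence with $a_{m-1}\le a_m$, which forces the relevant obliques) together with a figure; your cascade and the paper's use of Lemma~\ref{ding} are the same mechanism described in different language.
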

    \begin{proof}
        Given $i\in \{h_m-1,h_m,\ldots,k_m-1\}$ and $M\in \mathcal{M}_i(\{k_s;h_s\}_{s=1}^{m})$. On the one hand $e_{m,i}$ belongs to $M$-alternating hexagon $C_{m,i+1}$, and on the other hand in $CHS(\{k_s;h_s\}_{s=1}^{m})-V(e_{m,i})$ the lowermost (resp. uppermost) vertex of $C_{m,i+1}$ must be covered by the edge $r_{m+1,i+1}$ (resp. $l_{m,i+1}$) in $M$ by Lemma \ref{ding}. It follows that $e_{m,i}$ belongs to some minimum forcing set of $M$ by Lemma \ref{mini-forc}. Furthermore, it is observed from Fig. \ref{mono-chs-proof}(a) that
        \begin{align*}
            CHS(\{k_s;h_s\}_{s=1}^{m})\circleddash V(e_{m,i})=CHS(\{\min \{k_s,i\};h_s\}_{s=1}^{m-1}).
        \end{align*}
        By Lemma \ref{forc-subs-calc}, we have
        \begin{align}
            \label{mono1}
            F_i(\{k_s;h_s\}_{s=1}^{m})=&\sum _{ M\in {\mathcal{M}_i(\{k_s;h_s\}_{s=1}^{m})}}{ x^{ f(CHS(\{\min \{k_s,i\};h_s\}_{s=1}^{m-1}),M\cap E(CHS(\{\min \{k_s,i\};h_s\}_{s=1}^{m-1})))+1}}\nonumber \\
            =&\sum _{ M\in {\mathcal{M}(\{\min\{k_s,i\};h_s\}_{s=1}^{m-1})}}{x^{f(CHS(\{\min \{k_s,i\};h_s\}_{s=1}^{m-1}),M)}}\cdot x \nonumber\\
            =&F(\{\min\{k_s,i\};h_s\}_{s=1}^{m-1})x.
        \end{align}

        \begin{figure}[htbp]
            \centering
            \includegraphics[height=1.75in]{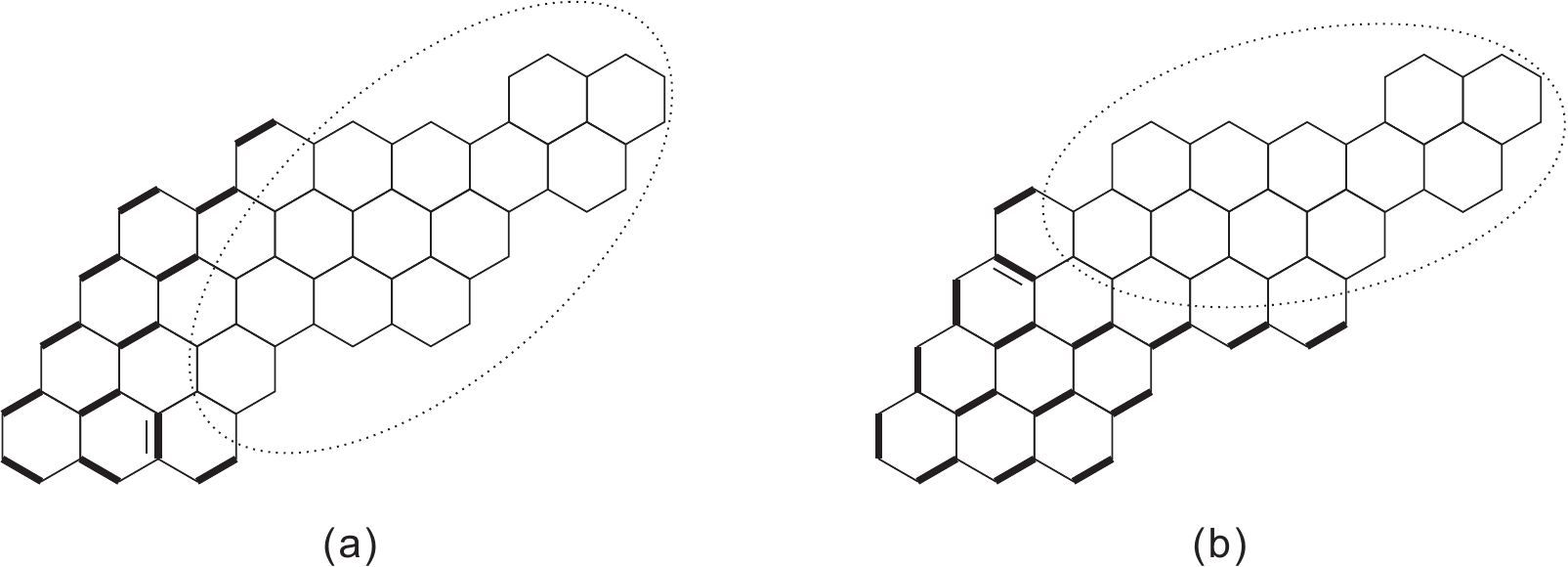}
            \caption{(a) $CHS(\{k_s;h_s\}_{s=1}^{m})\circleddash V(e_{m,i})$, and (b) $CHS(\{k_s;h_s\}_{s=1}^{m})\circleddash V(r_{j,k_m})$.}
            \label{mono-chs-proof}
        \end{figure}

        Given $M\in \mathcal{M}_{k_m}(\{k_s;h_s\}_{s=1}^{m})$. Let $j=\min\{j:e_{j,k_m}\in M\}$. Then $p\leqslant j\leqslant m$ and $r_{j,k_m}\in M$. On the one hand $r_{j,k_m}$ belongs to $M$-alternating hexagon $C_{j,k_m}$, and on the other hand in $CHS(\{k_s;h_s\}_{s=1}^{m})-V(r_{j,k_m})$ the leftmost two vertices of $C_{j,k_m}$ must be matched with each other in $M$, and the lowermost vertex of $C_{j,k_m}$ must be covered by the edge $l_{j+1,k_m-1}$ in $M$ by Lemma \ref{ding}. It follows that $r_{j,k_m}$ belongs to some minimum forcing set of $M$ by Lemma \ref{mini-forc}. Furthermore, it is observed from Fig. \ref{mono-chs-proof}(b) that
        \begin{align*}
            CHS(\{k_s;h_s\}_{s=1}^{m})\circleddash V(r_{j,k_m})=CHS(\{\min \{k_s,k_m-1\};h_s\}_{s=1}^{j-1}).
        \end{align*}
        By Lemma \ref{forc-subs-calc}, we have
        \begin{align}
            \label{mono2}
            &F_{k_m}(\{k_s;h_s\}_{s=1}^{m})\nonumber\\
            =&\sum_{j=p}^m\sum_{\begin{subarray}{c}M\in {\mathcal{M}_{k_m}(\{k_s;h_s\}_{s=1}^{m})} \\ j=\min\{j:e_{j,k_m}\in M\}\end{subarray}}{ x^{ f(CHS(\{\min \{k_s,k_m-1\};h_s\}_{s=1}^{j-1}),M\cap E(CHS(\{\min \{k_s,k_m-1\};h_s\}_{s=1}^{j-1})))+1}}\nonumber \\
            =&\sum_{j=p}^m\sum _{ M\in {\mathcal{M}(\{\min \{k_s,k_m-1\};h_s\}_{s=1}^{j-1})}}{{x}^{f(CHS(\{\min \{{{k}_{s}},{{k}_{m}}-1\};{{h}_{s}}\}_{s=1}^{j-1}),M)}}\cdot x\nonumber\\
            =&\sum_{j=p}^mF(\{\min \{k_s,k_m-1\};h_s\}_{s=1}^{j-1})x.
        \end{align}
        Substituting Eqs. (\ref{mono1},\ref{mono2}) into Eq. (\ref{mono0}), we immediately obtain the theorem.
    \end{proof}

    Li \cite{antiedge} characterized hexagonal systems with anti-forcing edges, which are truncated parallelograms. From the above theorem, we can derive the forcing polynomial.

    \begin{cor}
        \label{trum}
        The forcing polynomial of truncated parallelogram $CHS(\{k_s;1\}_{s=1}^{m})$ $(1\leqslant k_1\leqslant k_2\leqslant \cdots \leqslant k_m,$ $m\geqslant 1)$ (see Fig. \ref{eg-mono-chs}(a)) has the following recurrence relation$:$
        \begin{align*}
            F(\{k_s;1\}_{s=1}^{m})=&\sum_{i=0}^{k_m-1}F(\{\min\{k_s,i\};1\}_{s=1}^{m-1})x+\sum_{j=\min\{s:k_s=k_m\}}^mF(\{\min\{k_s,k_m-1\};1\}_{s=1}^{j-1})x
        \end{align*}
        where $F(\{b_s;1\}_{s=1}^{i})=F(\{b_s;1\}_{s=1}^{i-1})$ if $b_i=0$ and $i\geqslant 1,$ and $F(\{b_s;1\}_{s=1}^{j})=1$ if $j=0.$
    \end{cor}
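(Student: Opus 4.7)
The plan is to obtain Corollary~\ref{trum} as a direct specialization of Theorem~\ref{mono-chs-forc}: set $h_s=1$ for all $s=1,2,\ldots,m$ and check that each term and each boundary convention of the general recurrence reduces correctly to the statement of the corollary.

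First, I would verify that a truncated parallelogram $CHS(\{k_s;1\}_{s=1}^{m})$ with $1\leqslant k_1\leqslant k_2\leqslant\cdots\leqslant k_m$ satisfies the hypotheses of Theorem~\ref{mono-chs-forc}. The condition $k_{i+1}\geqslant k_i$ is given, the condition $h_{i+1}\geqslant h_i$ is trivial since all $h_s=1$, and $k_j\geqslant h_j=1$ holds for every $j$. Thus the theorem applies.

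Next I would substitute $h_s=1$ into the recurrence. The outer summation index $i$ in the first sum runs from $h_m-1=0$ to $k_m-1$, exactly as claimed. The value $p=\min\{s:k_s=k_m\}$ and the inner polynomials $F(\{\min\{k_s,i\};h_s\}_{s=1}^{m-1})$ and $F(\{\min\{k_s,k_m-1\};h_s\}_{s=1}^{j-1})$ become their $h_s=1$ versions, matching the statement of the corollary term by term. Hence the displayed recurrence of the corollary is literally the recurrence of Theorem~\ref{mono-chs-forc} restricted to the case $h_s\equiv 1$.

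Finally, I would reconcile the boundary conventions. In Theorem~\ref{mono-chs-forc} the convention $F(\{b_s;d_s\}_{s=1}^{i})=F(\{b_s;d_s\}_{s=1}^{i-1})$ is invoked when $b_i<d_i$; with $d_i=1$ and $b_i=\min\{k_s,i\}$ or $\min\{k_s,k_m-1\}$ (which are non-negative integers), the inequality $b_i<1$ is equivalent to $b_i=0$, reproducing exactly the convention stated in the corollary. The empty-graph convention $F(\{b_s;1\}_{s=1}^{0})=1$ transfers unchanged, since the empty graph has the empty perfect matching with forcing number $0$, contributing $x^{0}=1$. No genuine obstacle is expected; the only subtlety is this bookkeeping of boundary conventions, which I would present as a short remark before writing down the specialized recurrence.
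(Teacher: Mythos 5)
Your proposal is correct and matches the paper's (implicit) argument exactly: the corollary is obtained by specializing Theorem \ref{mono-chs-forc} to $h_s\equiv 1$, and your check that the convention $b_i<d_i$ reduces to $b_i=0$ when $d_i=1$ is precisely the only bookkeeping required.
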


    Zhang and Deng \cite{deng} obtained the continuity of forcing spectrum for truncated parallelograms by using Z-transform graph, and here we show the result by the degrees of forcing polynomial as follows.

    \begin{cor}{\em\cite{deng}}
        The forcing spectrum of truncated parallelogram $CHS(\{k_s;1\}_{s=1}^{m})$ $(1\leqslant k_1\leqslant k_2\leqslant \cdots \leqslant k_m,$ $m\geqslant 1)$ is an integer interval from 1$.$
    \end{cor}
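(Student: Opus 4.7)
The plan is to prove by induction on the number $m$ of rows the stronger statement that, for any truncated parallelogram $CHS(\{k_s;1\}_{s=1}^{m})$ with $m\geqslant 1$, the support of the forcing polynomial $F(\{k_s;1\}_{s=1}^{m})$ is an integer interval of the form $\{1,2,\ldots,R\}$. Since the forcing spectrum is by definition $\{i:\omega(G,i)>0\}$, i.e.\ the support of $F(G,x)$, this immediately yields the corollary.

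For the base case $m=1$, the recurrence in Corollary \ref{trum} collapses to $F(\{k_1;1\})=(k_1+1)x$, whose support is the one-point interval $\{1\}$.

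For the inductive step, I would split the terms of the recurrence into two groups. The $i=0$ term in the first sum is $F(\{\min\{k_s,0\};1\}_{s=1}^{m-1})\,x=F(\{0,\ldots,0;1\}_{s=1}^{m-1})\,x$, which collapses by repeated use of the stated convention ($b_i<1$) down to $F(\emptyset)\cdot x=x$, thereby guaranteeing that $x^{1}$ has positive coefficient; an analogous collapse occurs in the second sum when $k_m=1$. Every remaining term has the form $F(P')\,x$ where $P'$ is a non-empty truncated parallelogram with strictly fewer than $m$ rows, so by the inductive hypothesis its support is an interval $\{1,2,\ldots,R'\}$, and the shifted polynomial $F(P')\,x$ has support $\{2,3,\ldots,R'+1\}$, a contiguous block containing $2$.

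Putting the pieces together, the support of $F(\{k_s;1\}_{s=1}^{m})$ is the union of $\{1\}$ with finitely many contiguous intervals that each start at $2$. Because they all share the endpoint $2$, their union is itself a contiguous interval $\{2,3,\ldots,R\}$ up to the global maximum $R$; adjoining $\{1\}$ yields $\{1,2,\ldots,R\}$, completing the induction. The main delicate point I expect is verifying carefully that every degenerate term reduces \emph{exactly} to the monomial $x$ (so the spectrum genuinely starts from $1$) and that every non-degenerate contribution shifts to start at precisely $2$ rather than higher; once these two facts are pinned down, the gap-freeness of the spectrum falls out automatically from the shape of the recurrence without any further case analysis.
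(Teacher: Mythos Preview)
Your proposal is correct and follows essentially the same route as the paper's own proof: both argue by induction on $m$, both isolate the degenerate summand(s) in the recurrence of Corollary~\ref{trum} that collapse to the monomial $x$, and both apply the inductive hypothesis to the remaining summands so that each contributes an interval of degrees beginning at $2$, whence the union is $\{1,2,\ldots,R\}$. Your write-up is in fact slightly more explicit than the paper's in spelling out why the union of these intervals is contiguous; the only minor omission is that, besides the $i=0$ term and the case $k_m=1$, the second sum can also produce a degenerate $x$ when $j=\min\{s:k_s=k_m\}=1$, but since this again contributes only to degree $1$ it does not affect the argument.
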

    \begin{proof}
        We proceed by induction on the number of rows $m$. For initial case of $m=1$, obviously the forcing spectrum of $CHS(k_1;1)$ is \{1\}. Suppose that the result holds for the cases of less than $m(\geqslant 2)$. Now we consider the case of $m$. By Corollary \ref{trum}, we have
        \begin{align*}
            F(\{k_s;1\}_{s=1}^{m})=&\sum_{i=1}^{k_m-1}F(\{\min\{k_s,i\};1\}_{s=1}^{m-1})x\\
            &+\sum_{j=\min\{s:k_s=k_m\}}^mF(\{\min\{k_s,k_m-1\};1\}_{s=1}^{j-1})x+x.
        \end{align*}
        By inductive hypothesis, we know that the degrees of nonzero terms in each $F(\{\min\{k_s,i\};$\\$1\}_{s=1}^{m-1})x$ or $F(\{\min\{k_s,k_m-1\};1\}_{s=1}^{j-1})x$ in the above equation form an integer interval from 1 or 2; and the last term has degree 1, which implies the forcing spectrum is an integer interval from 1.
    \end{proof}

    We now give some forcing polynomials of particular monotonic CHS's.

    \begin{eg}{\em\cite{zhao}}
        \label{linear}
        The forcing polynomial of linear hexagonal chain $CHS(k;1)$ $(k\geqslant 1)$ (see Fig. \ref{eg-mono-chs}(b)) is
        \begin{align*}
            F(CHS(k;1),x)=\sum_{i=0}^{k-1}x+x=(k+1)x.
        \end{align*}
    \end{eg}

    \begin{eg}{\em\cite{zhao}}
        The forcing polynomial of zigzag hexagonal chain $Z_n$ with $n(\geqslant 3)$ hexagons (see Fig. \ref{eg-mono-chs}(c,d)) has the following recurrence relation$:$
        \begin{align*}
            F(Z_n,x)=2F(Z_{n-2},x)x+F(Z_{n-3},x)x
        \end{align*}
        with initial conditions $F(Z_0,x)=1,$ $F(Z_1,x)=2x,$ $F(Z_2,x)=3x.$
    \end{eg}
    \begin{proof}
        From Theorem \ref{mono-chs-forc}, for $n=2k$ and $k\geqslant 2$ we have
        \begin{align*}
        F(Z_n,x)=&F(CHS(1,2,\ldots,k,k;1,1,2,\ldots,k),x)\\
        =&2F(CHS(1,2,\ldots,k-1,k-1;1,1,2,\ldots,k-1),x)x\\
        &+F(CHS(1,2,\ldots,k-1;1,1,2,\ldots,k-2),x)x\\
        =&2F(Z_{n-2},x)x+F(Z_{n-3},x)x.
        \end{align*}
        For $n=2k-1$ and $k\geqslant 2$ we have
        \begin{align*}
            F(Z_n,x)=&F(CHS(2,3,\ldots,k,k;1,2,\ldots,k),x)\\
            =&2F(CHS(2,3,\ldots,k-1,k-1;1,2,\ldots,k-1),x)x\\
            &+F(CHS(2,3,\ldots,k-1;1,2,\ldots,k-2),x)x\\
            =&2F(Z_{n-2},x)x+F(Z_{n-3},x)x.
        \end{align*}
        The initial conditions are easy to verify.
    \end{proof}

    From the above recurrence relation, we can derive the explicit form of forcing polynomial for zigzag hexagonal chains, which can be seen in Ref. \cite{zhao}. In fact, from Theorem \ref{mono-chs-forc} we can derive recurrence relation of forcing polynomial for an arbitrary hexagonal chain, which coincides with that in Ref. \cite{zhao}.

    \begin{eg}{\em\cite{zhao1}}
        The forcing polynomial of benzenoid parallelogram $M(k,m)$ with $k(\geqslant 1)$ columns and $m(\geqslant 1)$ rows (see Fig. \ref{eg-mono-chs}(e)) has the following recurrence relation$:$
        \begin{align*}
            F(M(k,m),x)=&F(\{k;1\}_{s=1}^{m})=\sum_{i=0}^{k-1}F(\{i;1\}_{s=1}^{m-1})x+\sum_{j=1}^{m}F(\{k-1;1\}_{s=1}^{j-1})x\\
            =&\sum_{i=0}^{k-1}F(M(i,m-1),x)x+\sum_{j=0}^{m-1}F(M(k-1,j),x)x
        \end{align*}
        with initial conditions $F(M(0,n),x)=F(M(n,0),x)=1$ for $n\geqslant 0.$
    \end{eg}

    From the above recurrence relation, we can also derive the explicit form of forcing polynomial for benzenoid parallelogram, which can be seen in Ref. \cite{zhao1}. In the end of this section, we give an algorithm to find a minimum forcing set $S_M$ of perfect matching $M$ for $CHS(\{k_s;h_s\}_{s=1}^{m})$ $(k_{i+1}\geqslant k_{i},$ $h_{i+1}\geqslant h_{i},$ $k_j\geqslant h_j$ for $i=1,2,\ldots,m-1,$ $j=1,2,\ldots,m,~m\geqslant 1)$. The proof is similar to that of Theorem \ref{mono-chs-forc}, and we omit it here.

    \begin{alg}
        \label{alg1} ~\\
        \em\textbf{Input:} $CHS(k_1,k_2,\ldots,k_m;h_1,h_2,\ldots,h_m)$ with perfect matching $M=(a_1,a_2,\ldots,a_m)$.\\
        \textbf{Output:} A minimum forcing set $S_M$ of $M$.\\
        (1) Let $t\leftarrow m$; $j\leftarrow m$; $S\leftarrow \emptyset$;\\
        \hspace*{1cm}\textbf{while} $t\geqslant 1$ \textbf{do}\\
        \hspace*{2cm}$l_t\leftarrow k_t$, $t\leftarrow t-1$.\\
        (2) \textbf{While} $j\geqslant 1$ \textbf{do}\\
        \hspace*{1cm}\textbf{if} $l_j\geqslant h_j $ \textbf{then}\\
        \hspace*{2cm}\textbf{if} $a_j=l_j$ \textbf{then} $i\leftarrow \min\{s:a_s=a_j\}$, $t\leftarrow i-1$, $j\leftarrow i-1$, $S\leftarrow S\cup\{r_{i,a_i}\}$,\\
        \hspace*{3cm}\textbf{while} $t\geqslant 1$ \textbf{do}\\
        \hspace*{4cm}$l_t\leftarrow \min\{l_{t},l_i-1\}$, $t\leftarrow t-1$;\\
        \hspace*{2cm}\textbf{else} $i\leftarrow j$, $t\leftarrow i-1$, $j\leftarrow i-1$, $S\leftarrow S\cup\{e_{i,a_i}\}$,\\
        \hspace*{3cm}\textbf{while} $t\geqslant 1$ \textbf{do}\\
        \hspace*{4cm}$l_t\leftarrow \min\{l_{t},a_i\}$, $t\leftarrow t-1$;\\
        \hspace*{1cm}\textbf{else} $j\leftarrow j-1$.\\
        (3) Output $S_M\leftarrow S$.
    \end{alg}

    Note that Algorithm \ref{alg1} runs in time $O(m)$. For instance, if we run the above algorithm on input $CHS(3,3,3,4,5;1,1,2,2,3)$ and $M=(0,3,3,4,4)$, then we could get an edge subset $\{e_{5,4},r_{4,4},r_{2,3},e_{1,0}\}$ illustrated with a set of double lines in Fig. \ref{mono-chs}(b). On the other hand, there is a set of disjoint $M$-alternating cycles $\{C_{5,5},C_{4,4},C_{2,3},C_{1,1}\}$ illustrated with a set of solid cycles in Fig. \ref{mono-chs}(b).

\section{CHS with one turning}
    We now investigate another CHS, called \emph{CHS with one turning}. It can be obtained as follows from two monotonic CHS's, say $CHS(\{k_s;h_s\}_{s=1}^{m})$ and $CHS(\{k'_t;h'_t\}_{t=1}^{m'})$ with $k_m-h_m=k'_{m'}-h'_{m'}$. First place the two ones in left-monotonic way, then invert the second one upside down, and at last paste the $m$th row of the first one and the $m'$th row of the second one, see Fig. \ref{chs-tur}. The pasted row is called \emph{turning row}. We denote the CHS by $CHS(k_1,k_2,\ldots,k_m;h_1,h_2,\ldots,h_m|k'_1,k'_2,\ldots,k'_{m'};h'_1,h'_2,\ldots,h'_{m'})$, or briefly $CHS(\{k_s;$\\$h_s\}_{s=1}^{m}|\{k'_t;h'_t\}_{t=1}^{m'})$. What's more, its labels of hexagons and edges follow the corresponding two monotonic ones with the second one adding an apostrophe. Note that the hexagons and their edges in the turning row have two labels, such as $C_{m,i}=C'_{m',i-h_m+h'_{m'}}$.

    \begin{figure}[htbp]
        \centering
        \includegraphics[height=2.8in]{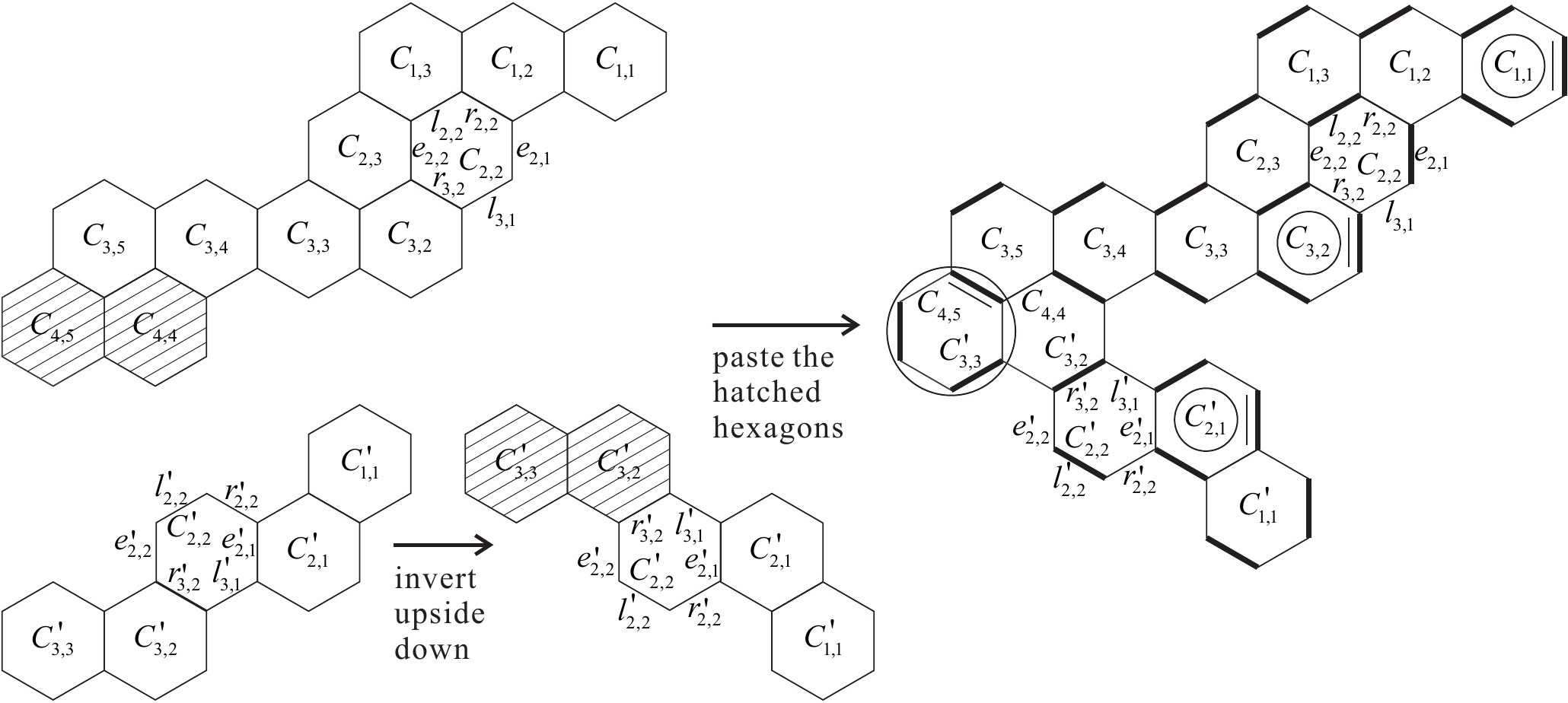}
        \caption{The way of obtaining $CHS(\{k_s;h_s\}_{s=1}^{m}|\{k'_t;h'_t\}_{t=1}^{m'})$ from $CHS(\{k_s;h_s\}_{s=1}^{m})$ and $CHS(\{k'_t;h'_t\}_{t=1}^{m'})$.}
        \label{chs-tur}
    \end{figure}

    Note that a CHS with one turning can be placed and represented in other ways. In particular, $CHS(\{k_s;h_s\}_{s=1}^{m}|\{k'_t;h'_t\}_{t=1}^{1})$ is a monotonic CHS. And we illustrate some examples of CHS with one turning in Figs. \ref{eg-chs-tur}(a-d). From now on suppose $m,m'\geqslant 2$.

    \begin{figure}[htbp]
        \centering
        \includegraphics[height=1.9in]{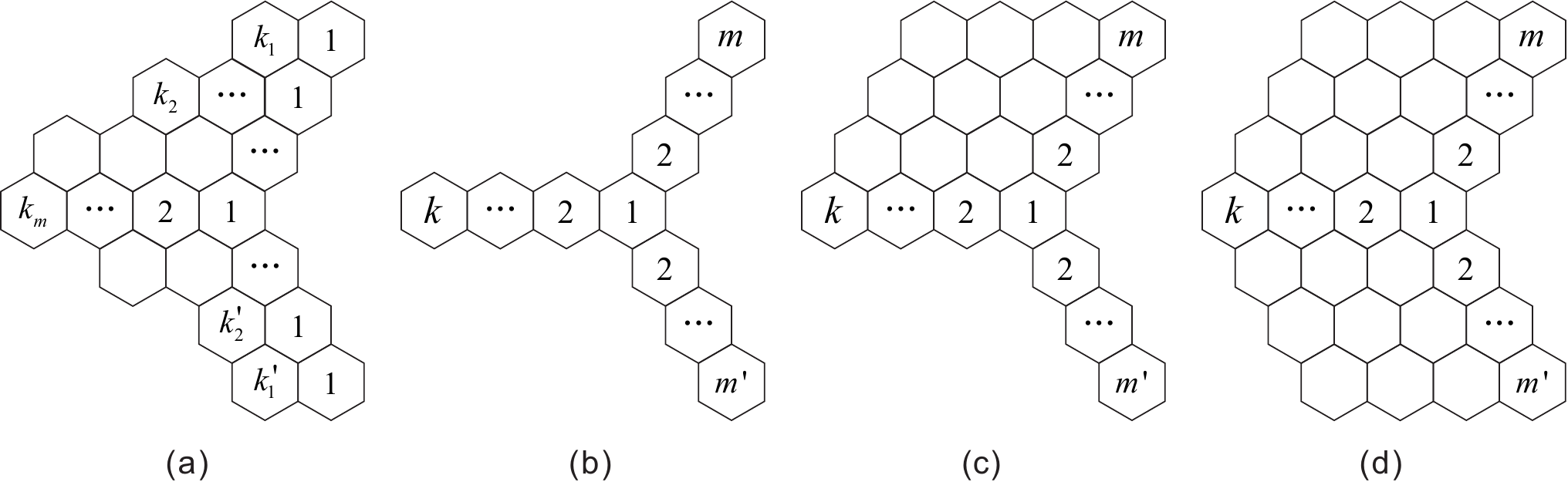}
        \caption{(a) $CHS(\{k_s;1\}_{s=1}^{m}|\{k'_t;1\}_{t=1}^{m'})$, (b) $CHS(1,\ldots,1,k;1,\ldots,1|1,\ldots,1,k;1,\ldots,1)$, (c) $CHS(k,\ldots,k,k;1,1,\ldots,1|1,\ldots,1,k;1,1,\ldots,1)$, and (d) $CHS(\{k;1\}_{s=1}^{m}|\{k;1\}_{t=1}^{m'})$.}
        \label{eg-chs-tur}
    \end{figure}

    In order to derive forcing polynomial of CHS's with one turning, we do some preliminaries first.

    \begin{lem}{\em\cite{dingli2}}
        \label{ding2}
        There is a bijection $g'$ between all perfect matchings $M$ of $CHS(\{k_s;$\\$h_s\}_{s=1}^{m}|\{k'_t;h'_t\}_{t=1}^{m'})$ and all binary non-decreasing sequences $((a_1,a_2,\ldots,a_m),(a'_1,a'_2,\ldots,$\\$a'_{m'}))$ with $a_m-h_m=a'_{m'}-h'_{m'},$ $a_i\in \{h_i-1,h_i,\ldots,k_i\},$ $a'_j\in \{h'_j-1,h'_j,\ldots,k'_j\}$ for $i=1,2,\ldots,m,$ $j=1,2,\ldots,m',$ such that $g'(M)=((a_1,a_2,\ldots,a_m),(a'_1,a'_2,\ldots,a'_{m'}))$ if $e_{i,a_i},e'_{j,a'_j}\in M$ for $i=1,2,\ldots,m,$ $j=1,2,\ldots,m'.$
    \end{lem}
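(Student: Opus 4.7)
The plan is to extend Lemma \ref{ding} to $H := CHS(\{k_s;h_s\}_{s=1}^{m}|\{k'_t;h'_t\}_{t=1}^{m'})$ by combining its monotonic behaviour on the two halves $H_1 := CHS(\{k_s;h_s\}_{s=1}^{m})$ and $H_2 := CHS(\{k'_t;h'_t\}_{t=1}^{m'})$, which share precisely the turning row. I would first define the forward map $g'$ by invoking the Zhang--Li vertical-edge property already used in Lemma \ref{ding}: every perfect matching $M$ of $H$ contains exactly one vertical edge in each row, so setting $e_{i,a_i} \in M$ and $e'_{j,a'_j} \in M$ determines unique $a_i \in \{h_i-1,\ldots,k_i\}$ and $a'_j \in \{h'_j-1,\ldots,k'_j\}$. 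Because the turning row is simultaneously row $m$ of $H_1$ and row $m'$ of $H_2$, and the labeling convention $C_{m,i} = C'_{m',i-h_m+h'_{m'}}$ identifies $e_{m,a_m}$ with $e'_{m',a'_{m'}}$, the single vertical edge of $M$ in that row forces the compatibility $a_m - h_m = a'_{m'} - h'_{m'}$.

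Next I would verify that both sequences are non-decreasing. The key observation is that between any two consecutive rows the local geometry of $H$ is that of an independent linear-chain strip, whose induced sub-matching is uniquely determined by the two bounding vertical-edge positions and exists precisely when the upper position is at most the lower. This is the same local argument that underlies Lemma \ref{ding}, and it transfers unchanged to $H$ because the opposite half contributes no edges to a strip strictly inside $H_1$ or $H_2$. Hence $a_1 \leq \cdots \leq a_m$ and $a'_1 \leq \cdots \leq a'_{m'}$, placing $g'(M)$ in the claimed codomain.

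For bijectivity I would run the construction in reverse: given any pair of sequences satisfying the stated range, monotonicity and compatibility conditions, install the prescribed vertical edge in each row and then complete the matching strip-by-strip between consecutive rows using the unique forced zigzag/oblique pattern provided by Lemma \ref{ding}'s local step. The compatibility condition makes the turning-row vertical edge unambiguous, and the non-decreasing conditions guarantee that each strip's matching exists. The main obstacle will be the bookkeeping at the turning row itself: one must check that the $H_1$-side propagation, which pins down the matching on the upper boundary of the turning row, and the $H_2$-side propagation, which pins down the matching on its lower boundary, do not conflict on the shared turning-row vertices. They cannot, because the two propagations act on disjoint edge sets (top zigzag versus bottom zigzag) and share only the single vertical edge $e_{m,a_m}$, which both assign identically thanks to $a_m - h_m = a'_{m'} - h'_{m'}$. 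Injectivity is then immediate since the strip-by-strip construction is uniquely determined by the vertical-edge positions, and surjectivity is the construction itself.
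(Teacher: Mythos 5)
This lemma is quoted from Zhang and Li \cite{dingli2}; the paper itself offers no proof (nor does it prove Lemma \ref{ding}), so there is no in-paper argument to compare yours against. On its own merits your outline is sound, and it takes the right route: since $CHS(\{k_s;h_s\}_{s=1}^{m}|\{k'_t;h'_t\}_{t=1}^{m'})$ is itself a CHS, the one-vertical-edge-per-row property gives the two sequences directly, the identification $e_{m,i}=e'_{m',i-h_m+h'_{m'}}$ gives the compatibility $a_m-h_m=a'_{m'}-h'_{m'}$, and the inter-row compatibility conditions are genuinely local to each boundary zigzag between consecutive rows, so they factor into the two non-decreasing conditions coupled only through the turning row.

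One point deserves to be made explicit, because it is exactly where a lazier version of your argument would fail: $M\cap E(CHS(\{k_s;h_s\}_{s=1}^{m}))$ is in general \emph{not} a perfect matching of $CHS(\{k_s;h_s\}_{s=1}^{m})$, so you cannot simply apply Lemma \ref{ding} to the two restrictions. A bottom-boundary vertex of the turning row may be covered in $M$ by a vertical edge of row $m'-1$ of the second half, leaving it uncovered in the restriction; this already happens in the phenanthrene case $CHS(1,1;1,1|1,1;1,1)$ for the matching $((0,1),(1,1))$, where the two vertices of $C_{2,1}$ on the shared oblique edge with $C'_{1,1}$ are matched into the second half. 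Your strip-by-strip formulation avoids this precisely because you assign the lower boundary zigzag of the turning row to the $H_2$-side propagation rather than treating it as the free bottom boundary of $H_1$; I would state that assignment explicitly rather than leaving it inside the phrase ``top zigzag versus bottom zigzag.'' (For the same reason, the paper's follow-up remark $g'(M)=(g(M\cap E(\cdot)),g(M\cap E(\cdot)))$ should be read only as ``record the vertical edges of $M$ in each row,'' not as a literal application of $g$ to the restrictions.) Finally, when you say a strip's matching ``exists precisely when the upper position is at most the lower,'' keep in mind that for the inverted half the geometric up/down is reversed while the intrinsic labels are not; the condition you want is the intrinsic one, $a'_j\leqslant a'_{j+1}$, read off from the standalone left-monotonic placement of the second half.
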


    Obviously, $g'(M)=(g(M\cap E(CHS(\{k_s;h_s\}_{s=1}^{m}))),g(M\cap E(CHS(\{k'_t;h'_t\}_{t=1}^{m'}))))$. From the above lemma, we can use a sequence to express each perfect matching of a CHS with one turning for convenience. As an example, the perfect matching of $CHS(3,3,5,5;$\\$1,2,2,4|1,2,3;1,1,2)$ illustrated with a set of bold lines in Fig. \ref{chs-tur} can be expressed by ((0,1,1,5),(0,0,3)).

    Now we can give a recurrence relation of the forcing polynomial for $CHS(\{k_s;h_s\}_{s=1}^{m}|$\\$\{k'_t;h'_t\}_{t=1}^{m'})$. For convenience, we make a convention that
    \begin{align*}
        \mathcal{M}(\{k_s;h_s\}_{s=1}^{m}|\{k'_t;h'_t\}_{t=1}^{m'})&=\mathcal{M}(CHS(\{k_s;h_s\}_{s=1}^{m}|\{k'_t;h'_t\}_{t=1}^{m'})),\\
        F(\{k_s;h_s\}_{s=1}^{m}|\{k'_t;h'_t\}_{t=1}^{m'})&=F(CHS(\{k_s;h_s\}_{s=1}^{m}|\{k'_t;h'_t\}_{t=1}^{m'}),x).
    \end{align*}
    According to which vertical edge in the turning row that belongs to perfect matching, we divide $\mathcal{M}(\{k_s;h_s\}_{s=1}^{m}|\{k'_t;h'_t\}_{t=1}^{m'})$ in $k_m-h_m+2$ subsets:
    \begin{align*}
        \mathcal{M}_i(\{k_s;h_s\}_{s=1}^{m}|\{k'_t;h'_t\}_{t=1}^{m'})=\{M\in \mathcal{M}(\{k_s;h_s\}_{s=1}^{m}|\{k'_t;h'_t\}_{t=1}^{m'}): e_{m,i}\in M\}
    \end{align*}
    for $i=h_m-1,h_m,\ldots,k_m$. By Eq. (\ref{equ1}), we have
    \begin{align}
        \label{tur0}
        F(\{k_s;h_s\}_{s=1}^{m}|\{k'_t;h'_t\}_{t=1}^{m'})=&\sum_{i=h_m-1}^{k_m}\sum \limits_{ M\in {\mathcal{M}_i(\{k_s;h_s\}_{s=1}^{m}|\{k'_t;h'_t\}_{t=1}^{m'})}}{ x^{ f(CHS(\{k_s;h_s\}_{s=1}^{m}|\{k'_t;h'_t\}_{t=1}^{m'}),M) } } \nonumber \\
        :=&\sum_{i=h_m-1}^{k_m}F_i(\{k_s;h_s\}_{s=1}^{m}|\{k'_t;h'_t\}_{t=1}^{m'}).
    \end{align}

    \begin{thm}
        \label{chs-tur-forc}
        The forcing polynomial of $CHS(\{k_s;h_s\}_{s=1}^{m}|\{k'_t;h'_t\}_{t=1}^{m'})$ $(k_{i+1}\geqslant k_{i},$ $h_{i+1}\geqslant h_{i},$ $k_j\geqslant h_j$ for $i=1,2,\ldots,m-1,$ $j=1,2,\ldots,m,$ $k'_{u+1}\geqslant k'_{u},$ $h'_{u+1}\geqslant h'_{u},$ $k'_v\geqslant h'_v$ for $u=1,2,\ldots,m'-1,$ $v=1,2,\ldots,m',$ $m,m'\geqslant 2)$ has the following form$:$

        \noindent$(1)$ if $k_{m-1}+k'_{m'-1}<k_m+k'_{m'},$ then
        \begin{align}
            \label{chs-tur-eq1}
            &F(\{k_s;h_s\}_{s=1}^{m}|\{k'_t;h'_t\}_{t=1}^{m'})\nonumber \\
            =&\sum_{i=h_m-1}^{k_m-1}F(\{\min \{k_s,i\};h_s\}_{s=1}^{m-1})F(\{\min\{k'_t,i-h_m+h'_{m'}\};h'_t\}_{t=1}^{m'-1})x\nonumber\\
            &+\sum_{j=p(k_{m})}^{m}\sum_{i=q}^{m'}F(\{\min \{k_s,k_m-1\};h_s\}_{s=1}^{j-1})F(\{\min\{k'_t,k'_{m'}-1\};h'_t\}_{t=1}^{i-1})x;
        \end{align}
        $(2)$ if $k_{m-1}+k'_{m'-1}=k_m+k'_{m'},$ and the maximal zigzag hexagonal chain $\mathcal{Z}$ starting from $C_{m,k_m}$ (see Figs. \ref{eg-mono-chs} (c,d)) contains $n(\geqslant 2)$ hexagons$,$ namely $\mathcal{Z}=C_{m,k_m}C_{m-1,k_m}C_{m-1,k_m-1}$\\$C_{m-2,k_m-1}C_{m-2,k_m-2}\cdots C_{m-\lfloor\frac{n}{2}\rfloor,k_m-\lfloor\frac{n-1}{2}\rfloor},$ then
        \begin{align}
            \label{chs-tur-eq2}
            &F(\{k_s;h_s\}_{s=1}^{m}|\{k'_t;h'_t\}_{t=1}^{m'})\nonumber \\
            =&\sum_{i=h_m-1}^{k_m-1}F(\{\min \{k_s,i\};h_s\}_{s=1}^{m-1})F(\{\min\{k'_t,i-h_m+h'_{m'}\};h'_t\}_{t=1}^{m'-1})x\nonumber\\
            +&\sum_{j=p(k_m)}^{m-1}F(\{\min \{k_s,k_m-1\};h_s\}_{s=1}^{j-1})F(\{k'_t;h'_t\}_{t=1}^{m'-1})x\nonumber\\
            +&\sum_{j=q}^{m'-1}F(\{k_s;h_s\}_{s=1}^{m-1})F(\{\min \{k'_t,k'_{m'}-1\};h'_t\}_{t=1}^{j-1})x\nonumber\\
            -&\sum_{i=p(k_m)}^{m-1}\sum_{j=q}^{m'-1}F(\{\min\{k_s,k_m-1\};h_s\}_{s=1}^{i-1})F(\{\min \{k'_t,k'_{m'}-1\};h'_t\}_{t=1}^{j-1})x^2\nonumber \\
            +&\sum_{i=h_{m-1}-1}^{k_{m}-2}F(\{\min \{k_s,i\};h_s\}_{s=1}^{m-2})F(\{\min\{k'_t,k'_{m'}-1\};h'_t\}_{t=1}^{m'-1})x^2\nonumber\\
            +&\sum_{w=1}^{\lfloor\frac{n}{2}\rfloor-1}\sum_{i=p(k_m-w)}^{m-w-1}\sum_{j=h'_{{m'}-1}-1}^{k'_{m'}-1}F(\{\min \{k_s,k_m-w-1\};h_s\}_{s=1}^{i-1})F(\{\min\{k'_t,j\};h'_t\}_{t=1}^{m'-2})x^{w+2}\nonumber\\
            +&\sum_{w=1}^{\lfloor\frac{n}{2}\rfloor-1}\sum_{i=h_{m-w-1}-1}^{k_m-w-2}F(\{\min \{k_s,i\};h_s\}_{s=1}^{m-w-2})F(\{\min\{k'_t,k'_{m'}-1\};h'_t\}_{t=1}^{m'-1})x^{w+2}+{\xi }_{n},
        \end{align}
        where $p(z)=\min\{p:C_{p,z}\text{ exists}\},$ $q=\min\{q:C'_{q,k'_{m'}}\text{ exists}\},$ and ${\xi }_{n}=$
        \begin{align*}
            &\left\{ \begin{aligned}
            &\sum_{j=h'_{m'-1}-1}^{k'_{m'}-1}F(\{\min \{k_s,k_m-\frac{n}{2}\};h_s\}_{s=1}^{m-\frac{n}{2}-1})F(\{\min\{k'_t,j\};h'_t\}_{t=1}^{m'-2})x^{\frac{n}{2}+1} ~~~~\text{ if $n$ is even},\\
            &F(\{k_s;h_s\}_{s=1}^{m-\frac{n+1}{2}})F(\{\min\{k'_t,k'_{m'}-1\};h'_t\}_{t=1}^{m'-1})x^{\frac{n+1}{2}}~~~~~~~~~~~~~~~~~~~~~~~~~~~~~\text{if $n$ is odd}.\\
            \end{aligned} \right.
        \end{align*}
    \end{thm}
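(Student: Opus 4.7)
The plan is to mimic the proof of Theorem \ref{mono-chs-forc}: partition $\mathcal{M}(\{k_s;h_s\}_{s=1}^{m}|\{k'_t;h'_t\}_{t=1}^{m'})$ by the unique vertical edge $e_{m,i}$ of the turning row lying in each matching (as in \eqref{tur0}), and compute each $F_i$ by locating an edge of some minimum forcing set via Lemmas \ref{mini-forc} and \ref{forc-subs-calc}.

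For $i\in\{h_m-1,\ldots,k_m-1\}$ and $M\in\mathcal{M}_i$, the edge $e_{m,i}$ is the right vertical of the $M$-alternating hexagon $C_{m,i+1}$, whose remaining $M$-covering edges are forced by Lemma \ref{ding2}; hence $e_{m,i}$ belongs to some minimum forcing set by Lemma \ref{mini-forc}. The residual $CHS(\{k_s;h_s\}_{s=1}^{m}|\{k'_t;h'_t\}_{t=1}^{m'})\circleddash V(e_{m,i})$ decomposes into the vertex-disjoint union of $CHS(\{\min\{k_s,i\};h_s\}_{s=1}^{m-1})$ and $CHS(\{\min\{k'_t,i-h_m+h'_{m'}\};h'_t\}_{t=1}^{m'-1})$; multiplicativity of $F(\cdot,x)$ on disjoint unions, together with Lemma \ref{forc-subs-calc}, gives the first sum common to both \eqref{chs-tur-eq1} and \eqref{chs-tur-eq2}.

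For $M\in\mathcal{M}_{k_m}$, write $j=\min\{s:e_{s,k_m}\in M\}$ and $i_0=\min\{t:e'_{t,k'_{m'}}\in M\}$, so that $r_{j,k_m}$ and $r'_{i_0,k'_{m'}}$ lie in $M$ on the $M$-alternating hexagons $C_{j,k_m}$ and $C'_{i_0,k'_{m'}}$ respectively. Under hypothesis (1), $k_{m-1}+k'_{m'-1}<k_m+k'_{m'}$ forces at least one of $k_{m-1}<k_m$ or $k'_{m'-1}<k'_{m'}$, and hence at least one of $j=m$, $i_0=m'$; consequently only one side can propagate, and a single forcing edge (the one leaving the turning row on the propagating side, or either oblique of $C_{m,k_m}$ if both are pinned at the turning hexagon) suffices to reduce $CHS\circleddash V(\cdot)$ to the disjoint union appearing in \eqref{chs-tur-eq1}. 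Summing over admissible $(j,i_0)$ via Lemma \ref{forc-subs-calc} produces \eqref{chs-tur-eq1}.

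The main obstacle is case (2). When $k_{m-1}=k_m$ and $k'_{m'-1}=k'_{m'}$, both $C_{m-1,k_m}$ and $C'_{m'-1,k'_{m'}}$ abut the turning hexagon and the zigzag $\mathcal{Z}$ can propagate arbitrarily far; the ``one forcing edge'' trick of case (1) breaks down whenever $j=m$ and $i_0=m'$ simultaneously. The strategy is to split $\mathcal{M}_{k_m}=A\cup B\cup R$ with $A=\{M:j<m\}$, $B=\{M:i_0<m'\}$, and $R=\{M:j=m\text{ and }i_0=m'\}$. For $A$, use $r_{j,k_m}$ alone, leaving the opposite half untouched — this gives line 2 of \eqref{chs-tur-eq2}, with the factor $F(\{k'_t;h'_t\}_{t=1}^{m'-1})$ for the free lower half; $B$ is symmetric (line 3). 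The overlap $A\cap B$ is counted twice and must be subtracted with factor $x^2$ in line 4, using the two now vertex-disjoint forcing edges $r_{j,k_m}$ and $r'_{i_0,k'_{m'}}$ (both strictly outside the turning hexagon). For $R$, iterate Lemmas \ref{mini-forc} and \ref{forc-subs-calc} along $\mathcal{Z}$: at the $w$-th step, the matching already forces the first $w$ hexagons of $\mathcal{Z}$ and an additional oblique edge of $\mathcal{Z}$ enters the minimum forcing set, contributing a further factor $x$; the cascade terminates either by exiting $\mathcal{Z}$ through its interior boundary (yielding the $x^{w+2}$ sums of lines 5--7, grouped by the step $w$ and the direction of exit) or by exhausting $\mathcal{Z}$ entirely, producing $\xi_n$ with the two parity branches reflecting whether the last forcing edge is vertical (odd $n$) or oblique (even $n$). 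The heart of the proof is the zigzag bookkeeping and the inclusion--exclusion in line 4; at every step one must verify that the chosen edge meets the premises of Lemma \ref{mini-forc} and that $CHS\circleddash V(\cdot)$ is precisely the claimed disjoint union of smaller monotonic CHSs.
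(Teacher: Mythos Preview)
Your plan is correct and matches the paper's proof essentially step for step. The paper partitions $\mathcal{M}_{k_m}$ in case~(2) exactly as you do: your sets $A$, $B$, $A\cap B$, $R$ are precisely the paper's $P_1$, $P_2$, $P_3$, and $P_4+P(m-1)$, handled with the same forcing edges $r_{j,k_m}$, $r'_{i_0,k'_{m'}}$ (singly and in pairs) and the same inclusion--exclusion; the paper then treats $R$ by the same recursive descent along $\mathcal{Z}$ that you sketch, splitting at each level into $P_5$-type, $P_6$-type, and a residual $P(m-w-1)$. One small wording caveat: the edges the paper actually places in the minimum forcing set during the zigzag recursion are not all ``oblique edges of $\mathcal{Z}$'' --- at each level the paper selects one edge from each of two or three pairwise disjoint $M$-alternating hexagons (e.g.\ $\{e_{m-1,i},\,r'_{m',k'_{m'}}\}$ for $P_4$, or $\{r_{i,k_m-1},\,r_{m,k_m},\,e'_{m'-1,j}\}$ for $P_5$), applying Lemma~\ref{mini-forc} to the whole set at once rather than adding edges one at a time --- so when you flesh out the cascade, choose your $S$ accordingly.
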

    \begin{proof}
        Given $i\in \{h_m-1,h_m,\ldots,k_m-1\}$ and $M\in \mathcal{M}_i(\{k_s;h_s\}_{s=1}^{m}|\{k'_t;h'_t\}_{t=1}^{m'})$. On the one hand $e_{m,i}$ belongs to $M$-alternating hexagon $C_{m,i+1}$, and on the other hand in $CHS(\{k_s;h_s\}_{s=1}^{m}|\{k'_t;h'_t\}_{t=1}^{m'})-V(e_{m,i})$ the lowermost (resp. uppermost) vertex of $C_{m,i+1}$ must be covered by the edge $r_{m+1,i+1}$ (resp. $l_{m,i+1}$) in $M$ by Lemma \ref{ding2}. It follows that $e_{m,i}$ belongs to some minimum forcing set of $M$ by Lemma \ref{mini-forc}. Furthermore, it is observed from Fig. \ref{chs-tur-proof}(a) that
        \begin{align*}
            &CHS(\{k_s;h_s\}_{s=1}^{m}|\{k'_t;h'_t\}_{t=1}^{m'})\circleddash V(e_{m,i})\\
            =&CHS(\{\min \{k_s,i\};h_s\}_{s=1}^{m-1})\cup CHS(\{\min\{k'_t,i-h_m+h'_{m'}\};h'_t\}_{t=1}^{m'-1}).
        \end{align*}
        By Lemma \ref{forc-subs-calc}, we have
        \begin{align}
            \label{tur1}
            &F_i(\{k_s;h_s\}_{s=1}^{m}|\{k'_t;h'_t\}_{t=1}^{m'})\nonumber \\
            =&\sum _{ M\in {\mathcal{M}_i(\{k_s;h_s\}_{s=1}^{m}|\{k'_t;h'_t\}_{t=1}^{m'})}}[{ x^{ f(CHS(\{\min \{k_s,i\};h_s\}_{s=1}^{m-1}),M\cap E(CHS(\{\min \{k_s,i\};h_s\}_{s=1}^{m-1})))}}\nonumber\\
            &\cdot { x^{ f(CHS(\{\min\{k'_t,i-h_m+h'_{m'}\};h'_t\}_{t=1}^{m'-1}),M\cap E(CHS(\{\min\{k'_t,i-h_m+h'_{m'}\};h'_t\}_{t=1}^{m'-1})))}}\cdot x]\nonumber \\
            =&\sum _{ M\in {\mathcal{M}(\{\min \{k_s,i\};h_s\}_{s=1}^{m-1})}}{x^{f(CHS(\{\min \{k_s,i\};h_s\}_{s=1}^{m-1}),M)}} \nonumber\\
            &\cdot \sum _{ M'\in {\mathcal{M}(\{\min\{k'_t,i-h_m+h'_{m'}\};h'_t\}_{t=1}^{m'-1})}}{x^{f(CHS(\{\min\{k'_t,i-h_m+h'_{m'}\};h'_t\}_{t=1}^{m'-1}),M')}} \cdot x \nonumber\\
            =&F(\{\min \{k_s,i\};h_s\}_{s=1}^{m-1})F(\{\min\{k'_t,i-h_m+h'_{m'}\};h'_t\}_{t=1}^{m'-1})x.
        \end{align}

        \begin{figure}[htbp]
            \centering
            \includegraphics[height=1.75in]{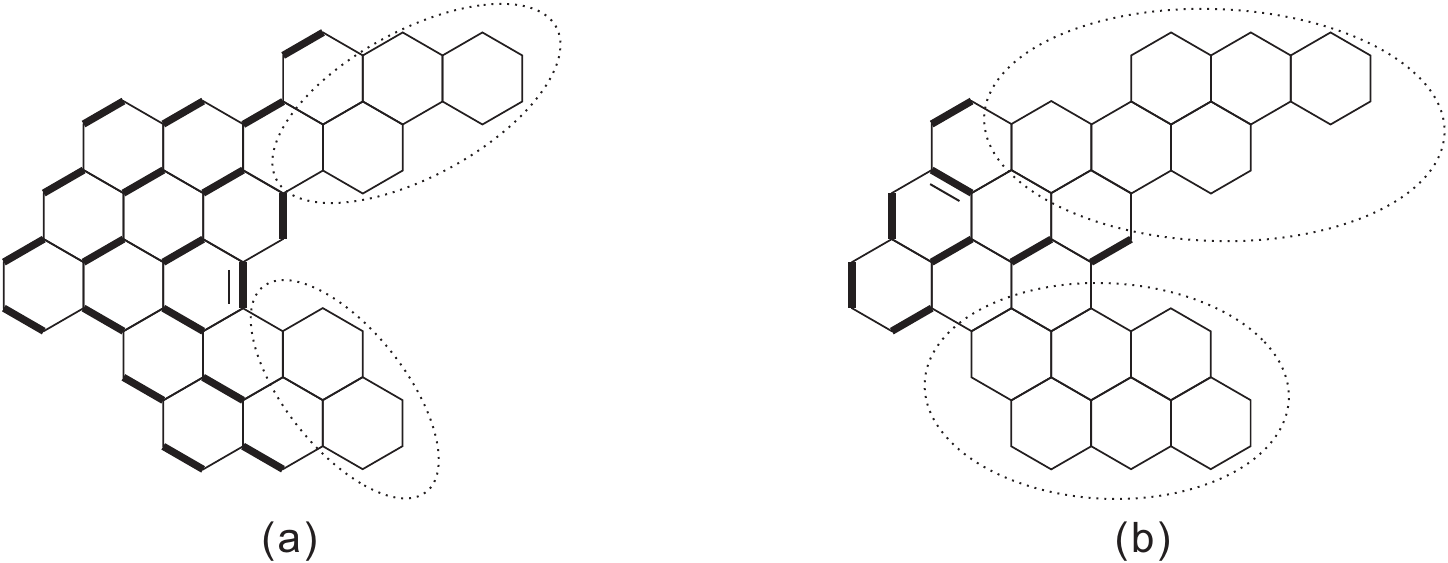}
            \caption{(a) $CHS(\{k_s;h_s\}_{s=1}^{m}|\{k'_t;h'_t\}_{t=1}^{m'})\circleddash V(e_{m,i})$, and (b) $CHS(\{k_s;h_s\}_{s=1}^{m}|\{k'_t;h'_t\}_{t=1}^{m'})\circleddash V(r_{j,k_m})$.}
            \label{chs-tur-proof}
        \end{figure}

        In the remaining part, we calculate $F_{k_m}(\{k_s;h_s\}_{s=1}^{m}|\{k'_t;h'_t\}_{t=1}^{m'})$ according to different values of $k_m$, $k_{m-1}$, $k'_{m'}$ and $k'_{{m'}-1}$.

        \textbf{Case 1.} $k_{m-1}\leqslant k_m$ and $k'_{m'-1}\leqslant k'_{m'}-1$. Then $p(k_m)\leqslant m$ and $q=m'$. Given $M\in \mathcal{M}_{k_m}(\{k_s;h_s\}_{s=1}^{m}|\{k'_t;h'_t\}_{t=1}^{m'})$. Let $j=\min\{j:e_{j,k_m}\in M\}$. Then $p(k_m)\leqslant j\leqslant m$ and $r_{j,k_m}\in M$. On the one hand $r_{j,k_m}$ belongs to $M$-alternating hexagon $C_{j,k_m}$, and on the other hand in $CHS(\{k_s;h_s\}_{s=1}^{m}|\{k'_t;h'_t\}_{t=1}^{m'})-V(r_{j,k_m})$ the leftmost two vertices of $C_{j,k_m}$ must be matched with each other in $M$, and the lowermost vertex of $C_{j,k_m}$ must be covered by the edge $l_{j+1,k_m-1}$ in $M$ by Lemma \ref{ding2}. It follows that $r_{j,k_m}$ belongs to some minimum forcing set of $M$ by Lemma \ref{mini-forc}. Furthermore, it is observed from Fig. \ref{chs-tur-proof}(b) that
        \begin{align*}
            &CHS(\{k_s;h_s\}_{s=1}^{m}|\{k'_t;h'_t\}_{t=1}^{m'})\circleddash V(r_{j,k_m})\\
            =&CHS(\{\min \{k_s,k_m-1\};h_s\}_{s=1}^{j-1})\cup CHS(\{k'_t;h'_t\}_{t=1}^{m'-1}).
        \end{align*}
        Similar to the calculation of Eq. (\ref{tur1}), by Lemma \ref{forc-subs-calc} we have
        \begin{align}
            \label{tur2}
            &F_{k_m}(\{k_s;h_s\}_{s=1}^{m}|\{k'_t;h'_t\}_{t=1}^{m'})=\sum_{j=p(k_{m})}^{m}F(\{\min \{k_s,k_m-1\};h_s\}_{s=1}^{j-1})F(\{k'_t;h'_t\}_{t=1}^{m'-1})x.
        \end{align}
        Substituting Eqs. (\ref{tur1},\ref{tur2}) into Eq. (\ref{tur0}), we immediately obtain Eq. (\ref{chs-tur-eq1}) in this case.

        \textbf{Case 2.} $k_{m-1}\leqslant k_m-1$ and $k'_{m'-1}=k'_{m'}$. Then $p(k_m)=m$ and $q\leqslant m'-1$. Given $M\in \mathcal{M}_{k_m}(\{k_s;h_s\}_{s=1}^{m}|\{k'_t;h'_t\}_{t=1}^{m'})$. Let $i=\min\{i:e'_{i,k'_{m'}}\in M\}$. Then $q\leqslant i\leqslant m'$ and $r'_{i,k'_{m'}}\in M$. By a similar argument to Case 1, we can derive
        \begin{align}
            \label{tur3}
            &F_{k_m}(\{k_s;h_s\}_{s=1}^{m}|\{k'_t;h'_t\}_{t=1}^{m'})=\sum_{i=q}^{m'}F(\{k_s;h_s\}_{s=1}^{m-1})F(\{\min\{k'_t,k'_{m'}-1\};h'_t\}_{t=1}^{i-1})x.
        \end{align}
        Substituting Eqs. (\ref{tur1},\ref{tur3}) into Eq. (\ref{tur0}), we immediately obtain Eq. (\ref{chs-tur-eq1}) in this case.

        \textbf{Case 3.} $k_{m-1}=k_m$ and $k'_{m'-1}=k'_{m'}$. Then $p(k_m)\leqslant m-1$ and $q\leqslant m'-1$. For $1\leqslant w\leqslant m-1$, denote
        \begin{align*}
            P(m-w)=\sum_{\begin{subarray}{c}M\in {\mathcal{M}_{k_m}(\{k_s;h_s\}_{s=1}^{m}|\{k'_t;h'_t\}_{t=1}^{m'})} \\ e'_{m'-1,k'_{m'}}\notin M, e_{m-i,k_m-i}\in M \text{ for }i=1,2,\ldots,w\end{subarray}}{ x^{ f(CHS(\{k_s;h_s\}_{s=1}^{m}|\{k'_t;h'_t\}_{t=1}^{m'}),M) } }.
        \end{align*}
        By subdividing $\mathcal{M}_{k_m}(\{k_s;h_s\}_{s=1}^{m}|\{k'_t;h'_t\}_{t=1}^{m'})$ in more subsets, we have
        \begin{align}
            \label{tur14}
            F_{k_m}(\{k_s;h_s\}_{s=1}^{m}|\{k'_t;h'_t\}_{t=1}^{m'})=&\sum_{\begin{subarray}{c}M\in {\mathcal{M}_{k_m}(\{k_s;h_s\}_{s=1}^{m}|\{k'_t;h'_t\}_{t=1}^{m'})} \\ e_{m-1,k_m}\in M\end{subarray}}{ x^{ f(CHS(\{k_s;h_s\}_{s=1}^{m}|\{k'_t;h'_t\}_{t=1}^{m'}),M) } }\nonumber\\
            &+\sum_{\begin{subarray}{c}M\in {\mathcal{M}_{k_m}(\{k_s;h_s\}_{s=1}^{m}|\{k'_t;h'_t\}_{t=1}^{m'})} \\ e'_{m'-1,k'_{m'}}\in M\end{subarray}}{ x^{ f(CHS(\{k_s;h_s\}_{s=1}^{m}|\{k'_t;h'_t\}_{t=1}^{m'}),M) } }\nonumber\\
            &-\sum_{\begin{subarray}{c}M\in {\mathcal{M}_{k_m}(\{k_s;h_s\}_{s=1}^{m}|\{k'_t;h'_t\}_{t=1}^{m'})} \\ e_{m-1,k_m},e'_{m'-1,k'_{m'}}\in M\end{subarray}}{ x^{ f(CHS(\{k_s;h_s\}_{s=1}^{m}|\{k'_t;h'_t\}_{t=1}^{m'}),M) } }\nonumber\\
            &+\sum_{\begin{subarray}{c}M\in {\mathcal{M}_{k_m}(\{k_s;h_s\}_{s=1}^{m}|\{k'_t;h'_t\}_{t=1}^{m'})} \\ e_{m-1,k_m},e_{m-1,k_m-1},e'_{m'-1,k'_{m'}}\notin M\end{subarray}}{ x^{ f(CHS(\{k_s;h_s\}_{s=1}^{m}|\{k'_t;h'_t\}_{t=1}^{m'}),M) } }\nonumber\\
            &+\sum_{\begin{subarray}{c}M\in {\mathcal{M}_{k_m}(\{k_s;h_s\}_{s=1}^{m}|\{k'_t;h'_t\}_{t=1}^{m'})} \\ e_{m-1,k_m-1}\in M,e'_{m'-1,k'_{m'}}\notin M\end{subarray}}{ x^{ f(CHS(\{k_s;h_s\}_{s=1}^{m}|\{k'_t;h'_t\}_{t=1}^{m'}),M) } }\nonumber\\
            :=&P_1+P_2-P_3+P_4+P(m-1).
        \end{align}

        Given $M\in \mathcal{M}_{k_m}(\{k_s;h_s\}_{s=1}^{m}|\{k'_t;h'_t\}_{t=1}^{m'})$ with $e_{m-1,k_m}\in M$. Let $j=\min\{j:e_{j,k_m}\in M\}$. Then $p(k_m)\leqslant j\leqslant m-1$ and $r_{j,k_m}\in M$. Note that $C_{j,k_m}$ is an $M$-alternating hexagon containing $r_{j,k_m}$. By Lemma \ref{ding2} and a similar argument to Case 1, we can derive that $r_{j,k_m}$ belongs to some minimum forcing set of $M$. Furthermore, it is observed from Fig. \ref{chs-tur-proof1}(a) that
        \begin{align*}
            &CHS(\{k_s;h_s\}_{s=1}^{m}|\{k'_t;h'_t\}_{t=1}^{m'})\circleddash V(r_{j,k_m})\\
            =&CHS(\{\min \{k_s,k_m-1\};h_s\}_{s=1}^{j-1})\cup CHS(\{k'_t;h'_t\}_{t=1}^{m'-1}).
        \end{align*}
        Similar to the calculation of Eq. (\ref{tur1}), by Lemma \ref{forc-subs-calc} we have
        \begin{align}
            \label{tur4}
            P_1=\sum_{j=p(k_m)}^{m-1}F(\{\min \{k_s,k_m-1\};h_s\}_{s=1}^{j-1})F(\{k'_t;h'_t\}_{t=1}^{m'-1})x.
        \end{align}

        \begin{figure}[htbp]
            \centering
            \includegraphics[height=1.75in]{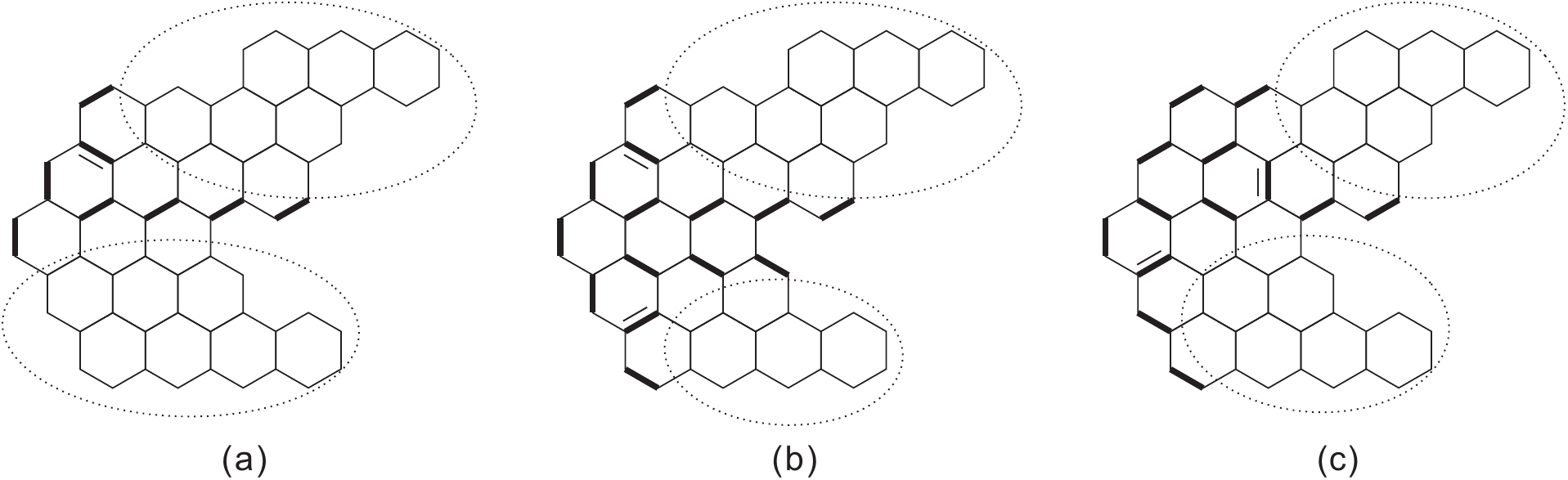}
            \caption{Illustration of calculations for $P_1$, $P_3$ and $P_4$.}
            \label{chs-tur-proof1}
        \end{figure}

        Given $M\in \mathcal{M}_{k_m}(\{k_s;h_s\}_{s=1}^{m}|\{k'_t;h'_t\}_{t=1}^{m'})$ with $e'_{m'-1,k'_{m'}}\in M$. By a similar argument to the calculation of $P_1$, we can derive that
        \begin{align}
            \label{tur11}
            P_2=\sum_{j=q}^{m'-1}F(\{k_s;h_s\}_{s=1}^{m-1})F(\{\min \{k'_t,k'_{m'}-1\};h'_t\}_{t=1}^{j-1})x.
        \end{align}

        Given $M\in \mathcal{M}_{k_m}(\{k_s;h_s\}_{s=1}^{m}|\{k'_t;h'_t\}_{t=1}^{m'})$ with $e_{m-1,k_m},e'_{m'-1,k'_{m'}}\in M$. Let $i=\min\{i:e_{i,k_m}\in M\}$ and $j=\min\{j:e'_{j,k'_{m'}}\in M\}$. Then $p(k_m)\leqslant i\leqslant m-1$, $q\leqslant j\leqslant m'-1$, and $r_{i,k_m},r'_{j,k'_{m'}}\in M$. On the one hand $r_{i,k_m}$ belongs to $M$-alternating hexagon $C_{i,k_m}$, $r'_{j,k'_{m'}}$ belongs to $M$-alternating hexagon $C'_{j,k'_{m'}}$, and the two hexagons are disjoint. And on the other hand in $CHS(\{k_s;h_s\}_{s=1}^{m}|\{k'_t;h'_t\}_{t=1}^{m'})-V(\{r_{i,k_m},r'_{j,k'_{m'}}\})$, the leftmost two vertices of $C_{i,k_m}$ (resp. $C'_{j,k'_{m'}}$) must be matched with each other in $M$, and the lowermost (resp. uppermost) vertex of $C_{i,k_m}$ (resp. $C'_{j,k'_{m'}}$) must be covered by the edge $l_{i+1,k_m-1}$ (resp. $l'_{j+1,k'_{m'}-1}$) in $M$ by Lemma \ref{ding2}. It follows that $\{r_{i,k_m},r'_{j,k'_{m'}}\}$ is contained in some minimum forcing set of $M$ by Lemma \ref{mini-forc}. Furthermore, it is observed from Fig. \ref{chs-tur-proof1}(b) that
        \begin{align*}
            &CHS(\{k_s;h_s\}_{s=1}^{m}|\{k'_t;h'_t\}_{t=1}^{m'})\circleddash V(\{r_{i,k_m},r'_{j,k'_{m'}}\})\\
            =&CHS(\{\min \{k_s,k_m-1\};h_s\}_{s=1}^{i-1})\cup CHS(\{\min\{k'_t,k'_{m'}-1\};h'_t\}_{t=1}^{j-1}).
        \end{align*}
        Similar to the calculation of Eq. (\ref{tur1}), by Lemma \ref{forc-subs-calc} we have
        \begin{align}
            \label{tur12}
            P_3=&\sum_{i=p(k_m)}^{m-1}\sum_{j=q}^{m'-1}F(\{\min\{k_s,k_m-1\};h_s\}_{s=1}^{i-1})F(\{\min \{k'_t,k'_{m'}-1\};h'_t\}_{t=1}^{j-1})x^2.
        \end{align}

        Given $M\in \mathcal{M}_{k_m}(\{k_s;h_s\}_{s=1}^{m}|\{k'_t;h'_t\}_{t=1}^{m'})$ with $e_{m-1,i}\in M$ and $e'_{m'-1,k'_{m'}}\notin M$ for $h_{m-1}-1\leqslant i\leqslant k_m-2$. Then $r'_{m',k'_{m'}}\in M$. Note that $C_{m-1,i+1}$ and $C'_{m',k'_{m'}}$ are disjoint $M$-alternating hexagons containing $e_{m-1,i}$ and $r'_{m',k'_{m'}}$, respectively. By Lemma \ref{ding2} and a similar argument to the calculation of $P_3$, we can derive that $\{e_{m-1,i},r'_{m',k'_{m'}}\}$ is contained in some minimum forcing set of $M$. Furthermore, it is observed from Fig. \ref{chs-tur-proof1}(c) that
        \begin{align*}
            &CHS(\{k_s;h_s\}_{s=1}^{m}|\{k'_t;h'_t\}_{t=1}^{m'})\circleddash V(\{e_{m-1,i},r'_{m',k'_{m'}}\})\\
            =&CHS(\{\min \{k_s,i\};h_s\}_{s=1}^{m-2})\cup CHS(\{\min\{k'_t,k'_{m'}-1\};h'_t\}_{t=1}^{m'-1}).
        \end{align*}
        Similar to the calculation of Eq. (\ref{tur1}), by Lemma \ref{forc-subs-calc} we have
        \begin{align}
            \label{tur6}
            P_4=\sum_{i=h_{m-1}-1}^{k_m-2}F(\{\min \{k_s,i\};h_s\}_{s=1}^{m-2})F(\{\min\{k'_t,k'_{m'}-1\};h'_t\}_{t=1}^{m'-1})x^2.
        \end{align}

        It remains to consider $P(m-1)$. Given $M\in \mathcal{M}_{k_m}(\{k_s;h_s\}_{s=1}^{m}|\{k'_t;h'_t\}_{t=1}^{m'})$ with $e_{m-1,k_m-1},e'_{m'-1,j}\in M$ for $h'_{m'-1}-1\leqslant j\leqslant k'_{m'}-1$. The value of $P(m-1)$ varies with different values of $n$, and we distinguish according to the following subcases.

        \textbf{Subcase 3.1.} $n=2$. Then the last hexagon of $\mathcal{Z}$ is $C_{m-1,k_m}$ and $C_{m,k_m-1}$ is nonexistent. Note that $C_{m-1,k_m}$ and $C'_{m'-1,j+1}$ are disjoint $M$-alternating hexagons containing $r_{m,k_m}$ and $e'_{m'-1,j}$, respectively. By Lemma \ref{ding2} and a similar argument to the calculation of $P_3$, we can derive that $\{r_{m,k_m},e'_{m'-1,j}\}$ is contained in some minimum forcing set of $M$. Furthermore, it is observed from Fig. \ref{chs-tur-proof2}(a) that
        \begin{align*}
            &CHS(\{k_s;h_s\}_{s=1}^{m}|\{k'_t;h'_t\}_{t=1}^{m'})\circleddash V(\{r_{m,k_m},e'_{m'-1,j}\})\\
            =&CHS(\{\min \{k_s,k_m-1\};h_s\}_{s=1}^{m-2})\cup CHS(\{\min\{k'_t,j\};h'_t\}_{t=1}^{m'-2}).
        \end{align*}
        Similar to the calculation of Eq. (\ref{tur1}), by Lemma \ref{forc-subs-calc} we have
        \begin{align*}
            P(m-1)=&\sum_{j=h'_{m'-1}-1}^{k'_{m'}-1}F(\{\min \{k_s,k_m-1\};h_s\}_{s=1}^{m-2})F(\{\min\{k'_t,j\};h'_t\}_{t=1}^{m'-2})x^2.
        \end{align*}

        \begin{figure}[htbp]
            \centering
            \includegraphics[height=1.9in]{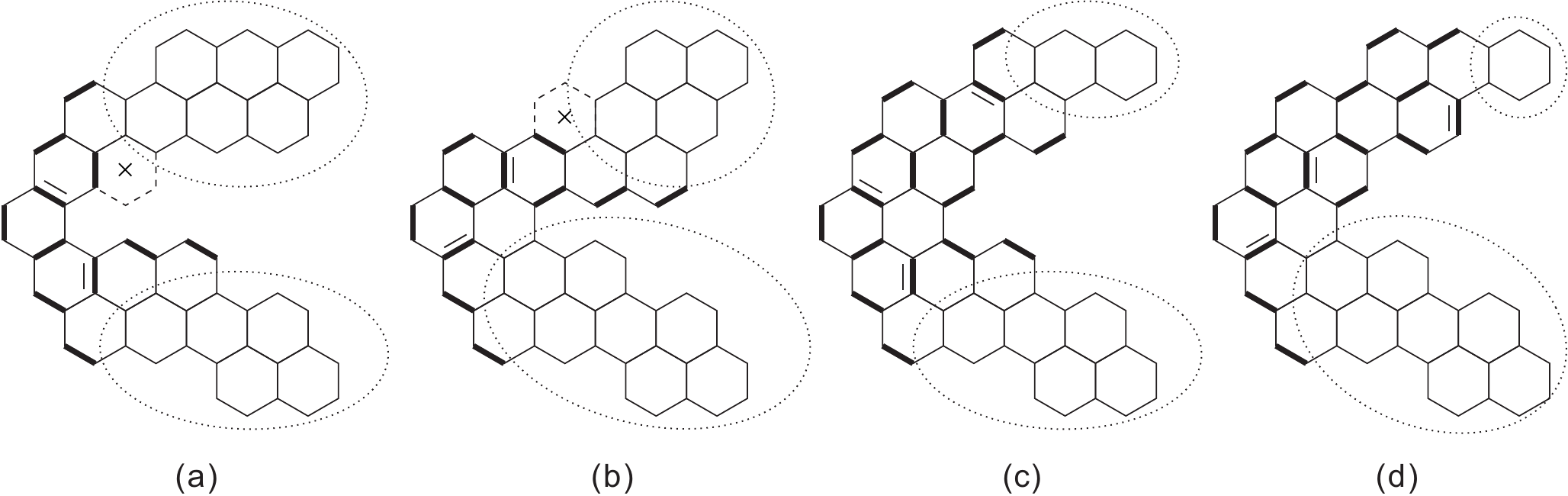}
            \caption{Illustration of calculation for $P(m-1)$.}
            \label{chs-tur-proof2}
        \end{figure}

        \textbf{Subcase 3.2.} $n=3$. Then the last hexagon of $\mathcal{Z}$ is $C_{m-1,k_m-1}$ and $C_{m-2,k_m}$ is nonexistent. Note that $C_{m-1,k_m-1}$ and $C'_{m',k'_{m'}}$ are disjoint $M$-alternating hexagons containing $e_{m-1,k_m-1}$ and $r'_{m',k'_{m'}}$, respectively. By Lemma \ref{ding2} and a similar argument to the calculation of $P_3$, we can derive that $\{e_{m-1,k_m-1},r'_{m',k'_{m'}}\}$ is contained in some minimum forcing set of $M$. Furthermore, it is observed from Fig. \ref{chs-tur-proof2}(b) that
        \begin{align*}
            &CHS(\{k_s;h_s\}_{s=1}^{m}|\{k'_t;h'_t\}_{t=1}^{m'})\circleddash V(\{e_{m-1,k_m-1},r'_{m',k'_{m'}}\})\\
            =&CHS(\{k_s;h_s\}_{s=1}^{m-2})\cup CHS(\{\min\{k'_t,k'_{m'}-1\};h'_t\}_{t=1}^{m'-1}).
        \end{align*}
        Similar to the calculation of Eq. (\ref{tur1}), by Lemma \ref{forc-subs-calc} we have
        \begin{align*}
            P(m-1)=&F(\{k_s;h_s\}_{s=1}^{m-2})F(\{\min\{k'_t,k'_{m'}-1\};h'_t\}_{t=1}^{m'-1})x^2.
        \end{align*}

        \textbf{Subcase 3.3.} $n\geqslant 4$. Then $m\geqslant 3$ and $p(k_m-1)\leqslant m-2$. By subdividing $\{M\in \mathcal{M}_{k_m}(\{k_s;h_s\}_{s=1}^{m}|\{k'_t;h'_t\}_{t=1}^{m'}):e_{m-1,k_m-1}\in M,e'_{m'-1,k'_{m'}}\notin M\}$ in more subsets, we have
        \begin{align}
            \label{tur13}
            P(m-1)=&\sum_{\begin{subarray}{c}M\in {\mathcal{M}_{k_m}(\{k_s;h_s\}_{s=1}^{m}|\{k'_t;h'_t\}_{t=1}^{m'})} \\ e_{m-1,k_m-1},e_{m-2,k_m-1},e'_{m'-1,j}\in M \text{ for }h'_{m'-1}-1 \leqslant j\leqslant k'_{m'}-1 \\ \end{subarray}}{ x^{ f(CHS(\{k_s;h_s\}_{s=1}^{m}|\{k'_t;h'_t\}_{t=1}^{m'}),M) } } \nonumber \\
            +&\sum_{\begin{subarray}{c}M\in {\mathcal{M}_{k_m}(\{k_s;h_s\}_{s=1}^{m}|\{k'_t;h'_t\}_{t=1}^{m'})} \\ e_{m-1,k_m-1}\in M,e'_{m'-1,k'_{m'}},e_{m-2,k_m},e_{m-2,k_m-1},e_{m-2,k_m-2}\notin M\end{subarray}}{ x^{ f(CHS(\{k_s;h_s\}_{s=1}^{m}|\{k'_t;h'_t\}_{t=1}^{m'}),M) } } \nonumber\\
            +&\sum_{\begin{subarray}{c}M\in {\mathcal{M}_{k_m}(\{k_s;h_s\}_{s=1}^{m}|\{k'_t;h'_t\}_{t=1}^{m'})} \\ e_{m-1,k_m-1},e_{m-2,k_m-2}\in M,e'_{m'-1,k'_{m'}}\notin M\end{subarray}}{ x^{ f(CHS(\{k_s;h_s\}_{s=1}^{m}|\{k'_t;h'_t\}_{t=1}^{m'}),M) } } \nonumber\\
            :=&P_5+P_6+P(m-2).
        \end{align}

        Given $M\in {\mathcal{M}_{k_m}(\{k_s;h_s\}_{s=1}^{m}|\{k'_t;h'_t\}_{t=1}^{m'})}$ with $e_{m-1,k_m-1},e_{m-2,k_m-1},e'_{m'-1,j}\in M$ for $h'_{m'-1}-1 \leqslant j\leqslant k'_{m'}-1$. Let $i=\min\{i:e_{i,k_m-1}\in M\}$. Then $p(k_m-1)\leqslant i\leqslant m-2$. Note that $C_{i,k_m-1}$, $C_{m-1,k_m}$ and $C'_{m'-1,j+1}$ are disjoint $M$-alternating hexagons containing $r_{i,k_m-1}$, $r_{m,k_m}$ and $e'_{m'-1,j}$, respectively. By Lemma \ref{ding2} and a similar argument to the calculation of $P_3$, we can derive that $\{r_{i,k_m-1},r_{m,k_m},e'_{m'-1,j}\}$ is contained in some minimum forcing set of $M$. Furthermore, it is observed from Fig. \ref{chs-tur-proof2}(c) that
        \begin{align*}
            &CHS(\{k_s;h_s\}_{s=1}^{m}|\{k'_t;h'_t\}_{t=1}^{m'})\circleddash V(\{r_{i,k_m-1},r_{m,k_m},e'_{m'-1,j}\})\\
            =&CHS(\{\min \{k_s,k_m-2\};h_s\}_{s=1}^{i-1})\cup CHS(\{\min\{k'_t,j\};h'_t\}_{t=1}^{m'-2}).
        \end{align*}
        Similar to the calculation of Eq. (\ref{tur1}), by Lemma \ref{forc-subs-calc} we have
        \begin{align}
            \label{tur9}
            P_5=&\sum_{i=p(k_m-1)}^{m-2}\sum_{j=h'_{{m'}-1}-1}^{k'_{m'}-1}F(\{\min \{k_s,k_m-2\};h_s\}_{s=1}^{i-1})F(\{\min\{k'_t,j\};h'_t\}_{t=1}^{m'-2})x^3.
        \end{align}

        Given $M\in {\mathcal{M}_{k_m}(\{k_s;h_s\}_{s=1}^{m}|\{k'_t;h'_t\}_{t=1}^{m'})}$ with $e_{m-1,k_m-1},e_{m-2,i}\in M$ and $e'_{m'-1,k'_{m'}}\notin M$ for $h_{m-2}-1\leqslant i\leqslant k_m-3$. Obviously there is a perfect matching which satisfies the condition if and only if $n\geqslant 5$. Note that $C_{m-2,i+1}$, $C_{m-1,k_m-1}$ and $C'_{m',k'_{m'}}$ are disjoint $M$-alternating hexagons containing $e_{m-2,i}$ $e_{m-1,k_m-1}$ and $r'_{m',k'_{m'}}$, respectively. By Lemma \ref{ding2} and a similar argument to the calculation of $P_3$, we can derive that $\{e_{m-2,i},e_{m-1,k_m-1},r'_{m',k'_{m'}}\}$ is contained in some minimum forcing set of $M$. Furthermore, it is observed from Fig. \ref{chs-tur-proof2}(d) that
        \begin{align*}
            &CHS(\{k_s;h_s\}_{s=1}^{m}|\{k'_t;h'_t\}_{t=1}^{m'})\circleddash V(\{e_{m-2,i},e_{m-1,k_m-1},r'_{m',k'_{m'}}\})\\
            =&CHS(\{\min \{k_s,i\};h_s\}_{s=1}^{m-3})\cup CHS(\{\min\{k'_t,k'_{m'}-1\};h'_t\}_{t=1}^{m'-1}).
        \end{align*}
        Similar to the calculation of Eq. (\ref{tur1}), by Lemma \ref{forc-subs-calc} we have
        \begin{align}
            \label{tur10}
            P_6=\sum_{i=h_{m-2}-1}^{k_m-3}F(\{\min \{k_s,i\};h_s\}_{s=1}^{m-3})F(\{\min\{k'_t,k'_{m'}-1\};h'_t\}_{t=1}^{m'-1})x^3.
        \end{align}
        Substituting Eqs. (\ref{tur9},\ref{tur10}) into Eq. (\ref{tur13}), we immediately obtain $P(m-1)$ in this case.

        Similar to the above three subcases, we have for $1\leqslant w\leqslant \lfloor\frac{n}{2}\rfloor$
        \begin{align*}
            &P(m-w)=\\
            &\left\{ \begin{aligned}
            &\sum_{j=h'_{m'-1}-1}^{k'_{m'}-1}F(\{\min \{k_s,k_m-w\};h_s\}_{s=1}^{m-w-1})F(\{\min\{k'_t,j\};h'_t\}_{t=1}^{m'-2})x^{w+1}~~~~~~~\text{ if } n=2w,\\
            &F(\{k_s;h_s\}_{s=1}^{m-w-1})F(\{\min\{k'_t,k'_{m'}-1\};h'_t\}_{t=1}^{m'-1})x^{w+1} ~~~~~~~~~~~~~~~~~~~~~~~~~\text{ if } n=2w+1,\\
            & \sum_{i=p(k_m-w)}^{m-w-1}\sum_{j=h'_{{m'}-1}-1}^{k'_{m'}-1}F(\{\min \{k_s,k_m-w-1\};h_s\}_{s=1}^{i-1})F(\{\min\{k'_t,j\};h'_t\}_{t=1}^{m'-2})x^{w+2}\\
            &+\sum_{i=h_{m-w-1}-1}^{k_m-w-2}F(\{\min \{k_s,i\};h_s\}_{s=1}^{m-w-2})F(\{\min\{k'_t,k'_{m'}-1\};h'_t\}_{t=1}^{m'-1})x^{w+2}\\
            &+P(m-w-1)~~~~~~~~~~~~~~~~~~~~~~~~~~~~~~~~~~~~~~~~~~~~~~~~~~~~~~~~~~~~~~~~~~~~~~~\text{ if }n\geqslant 2w+2.\\
            \end{aligned} \right.
        \end{align*}
        Substituting each $P(m-w)$ into $P(m-1)$ and substituting Eqs. (\ref{tur4}-\ref{tur6}) into Eq. (\ref{tur14}), we immediately obtain $F_{k_m}(\{k_s;h_s\}_{s=1}^{m}|\{k'_t;h'_t\}_{t=1}^{m'})$ in this case. Furthermore, substituting the result and Eq. (\ref{tur1}) into Eq. (\ref{tur0}), we immediately obtain Eq. (\ref{chs-tur-eq2}).
    \end{proof}

    Hansen and Zheng \cite{5} and Zhang and Li \cite{19} characterized hexagonal systems with forcing edges independently, which are CHS's with one turning while $h_s=h'_t=1$ for $s=1,2,\ldots,m$ and $t=1,2,\ldots,m'$ (see Fig. \ref{eg-chs-tur}(a)). The forcing polynomial can be derived from the above theorem. In the following, we give some forcing polynomials of particular CHS's with one turning, which are illustrated in Figs. \ref{eg-chs-tur}(b-d).

    \begin{eg}{\em\cite{zhao}}
        The forcing polynomial of $CHS(1,\ldots,1,k;1,1,\ldots,1|1,\ldots,1,k;1,1,$\\$\ldots,1)$ $(k\geqslant 1,~m,m'\geqslant 2)$ (see Fig. \ref{eg-chs-tur}(b)) has the following form$:$

        \noindent$(1)$ if $k=1,$ then
        \begin{align*}
            &F(\{1;1\}_{s=1}^{m}|\{1;1\}_{t=1}^{m'})\\
            =&x+\sum_{j=1}^{m-1}F(\{1;1\}_{t=1}^{m'-1})x+\sum_{j=1}^{m'-1}F(\{1;1\}_{s=1}^{m-1})x-\sum_{i=1}^{m-1}\sum_{j=1}^{m'-1}x^2+x^2=mm'x^2+x;
        \end{align*}
        $(2)$ if $k\geqslant 2,$ then
        \begin{align*}
            &F(1,\ldots,1,k;1,1,\ldots,1|1,\ldots,1,k;1,1,\ldots,1)\\
            =&\sum_{i=1}^{k-1}F(\{1;1\}_{s=1}^{m-1})F(\{1;1\}_{t=1}^{m'-1})x+x+F(\{1;1\}_{s=1}^{m-1})F(\{1;1\}_{t=1}^{m'-1})x=kmm'x^3+x.
        \end{align*}
    \end{eg}

    From the above conclusion, we know that the forcing spectrum of $CHS(\{k_s;h_s\}_{s=1}^{m}|\{k'_t;$\\$h'_t\}_{t=1}^{m'})$ is not always continuous, especially for hexagonal systems with forcing edges.

    \begin{eg}
        The forcing polynomial of $CHS(k,\ldots,k,k;1,1,\ldots,1|1,\ldots,1,k;1,1,\ldots,$\\$1)$ $(k,m,m'\geqslant 2)$ (see Fig. \ref{eg-chs-tur}(c)) is
        \begin{align*}
            &F(k,\ldots,k,k;1,1,\ldots,1|1,\ldots,1,k;1,1,\ldots,1)\\
            =&\sum_{i=1}^{k-1}F(\{i;1\}_{s=1}^{m-1})F(\{1;1\}_{t=1}^{m'-1})x+x+\sum_{j=1}^{m}F(\{k-1;1\}_{s=1}^{j-1})F(\{1;1\}_{t=1}^{m'-1})x\\
            =&m'F(M(k,m),x)x-m'x^2+x.
        \end{align*}
    \end{eg}

    \begin{eg}
        The forcing polynomial of $CHS(\{k;1\}_{s=1}^{m}|\{k;1\}_{t=1}^{m'})$ $(k,m,m'\geqslant 2)$ (see Fig. \ref{eg-chs-tur}(d)) is
        \begin{align*}
            &F(\{k;1\}_{s=1}^{m}|\{k;1\}_{t=1}^{m'})\\
            =&\sum_{i=0}^{k-1}F(\{i;1\}_{s=1}^{m-1})F(\{i;1\}_{t=1}^{m'-1})x+\sum_{j=1}^{m-1}F(\{k-1;1\}_{s=1}^{j-1})F(\{k;1\}_{t=1}^{m'-1})x\\
            &+\sum_{j=1}^{m'-1}F(\{k;1\}_{s=1}^{m-1})F(\{k-1;1\}_{t=1}^{j-1})x-\sum_{i=1}^{m-1}\sum_{j=1}^{m'-1}F(\{k-1;1\}_{s=1}^{i-1})F(\{k-1;1\}_{t=1}^{j-1})x^2 \\
            &+\sum_{i=0}^{k-2}F(\{i;1\}_{s=1}^{m-2})F(\{k-1;1\}_{t=1}^{m'-1})x^2\\
            &+\sum_{w=1}^{\min\{k-1,m-2\}}\sum_{i=1}^{m-w-1}\sum_{j=0}^{k-1}F(\{k-w-1;1\}_{s=1}^{i-1})F(\{j;1\}_{t=1}^{m'-2})x^{w+2}\\
            &+\sum_{w=1}^{\min\{k-1,m-2\}}\sum_{i=0}^{k-w-2}F(\{i;1\}_{s=1}^{m-w-2})F(\{k-1;1\}_{t=1}^{m'-1})x^{w+2}+F^{\ast},
        \end{align*}
        where
        \begin{align*}
            F^{\ast}=\left\{ \begin{aligned}
            &\sum_{j=0}^{k-1}F(\{j;1\}_{t=1}^{m'-2})x^{k+1} ~~~\text{ if } k<m,\\
            &F(\{k-1;1\}_{t=1}^{m'-1})x^m ~~~~~\text{ if }k\geqslant m.\\
            \end{aligned} \right.
        \end{align*}
    \end{eg}

\end{document}